\numberwithin{equation}{section}
	\definecolor{yellow-green}{rgb}{0.6, 0.8, 0.2}
	\definecolor{viridian}{rgb}{0.25, 0.51, 0.43}
\title{Non-maximal Anosov Representations from Surface Groups to $\mathrm{SO}_0(2,3)$}
\author{Junming Zhang\thanks{Chern Institute of Mathematics, Nankai University, Tianjin 300071, China, \href{mailto:junmingzhang@mail.nankai.edu.cn}{junmingzhang@mail.nankai.edu.cn}}}
\date{\vspace{-2em}}
\newtheorem{theorem}{Theorem}[section]
\newtheorem{corollary}[theorem]{Corollary}
\newtheorem{proposition}[theorem]{Proposition}
\newtheorem{lemma}[theorem]{Lemma}
\newtheorem{definition}[theorem]{Definition}
\newtheorem{remark}[theorem]{Remark}
\newtheorem{example}[theorem]{Example}
\newtheorem{fact}[theorem]{Fact}
\newtheorem{question}[theorem]{Question}
\newcommand{\dd}{{\mathrm d}}
\newcommand{\EE}{{\mathcal{E}}}
\newcommand{\KK}{{\mathcal{K}}}
\newcommand{\LL}{{\mathcal{L}}}
\newcommand{\II}{{\mathcal{I}}}
\newcommand{\UU}{{\mathcal{U}}}
\newcommand{\VV}{{\mathcal{V}}}
\newcommand{\SO}{{\mathrm{SO}}}
\newcommand{\so}{{\mathfrak{so}}}
\newcommand{\SU}{{\mathrm{SU}}}
\newcommand{\oX}{{\overline{X}}}
\newcommand{\iu}{{\sqrt{-1}}}
\numberwithin{equation}{section}
\begin{document}

\pagenumbering{gobble} 
\maketitle

\pagenumbering{arabic} 
\setcounter{section}{0}
\setcounter{page}{1}
\vspace{-1em}

\begin{abstract}
    We prove the representation given by a stable $\alpha_1$-cyclic parabolic $\SO_0(2,3)$-Higgs bundle satisfying specific assumptions through the non-Abelian Hodge correspondence is $\{\alpha_2\}$-almost dominated. This is a generalization of Filip's result on weight $3$ variation of Hodge structures and answers a question asked by Collier, Tholozan and Toulisse.
\end{abstract}

\small\textbf{Keywords. }{Higgs bundles, Anosov representations, non-Abelian Hodge correspondence}

\small\textbf{2020 Mathematics Subject Classification. }{53C07, 53C21, 53C43}

\large

\tableofcontents

\section{Introduction}
Higher Teichm\"uller theory, as a generalization of classical Teichm\"uller theory, is concerned with the study of representations of fundamental group $\pi_1(S)$ of oriented hyperbolic surface $S$ into simple real Lie groups $G$ of higher rank. The concept of Anosov representations introduced by F. Labourie in \cite{labourie2006anosov} plays an important role in the study of higher Teichm\"uller theory. 

Another useful tool in higher Teichm\"uller theory is the Higgs bundle. For a closed oriented hyperbolic surface $S$ equipped with a Riemann surface structure $X=(S,J)$, by the celebrated non-Abelian Hodge correspondence founded by Hitchin in \cite{hitchin1987self} and developed by Corlette, Simpson and many others, reductive representations $\pi_1(S)\to\mathrm{GL}(n,\mathbb{C})$ correspond to polystable $\mathrm{GL}(n,\mathbb{C})$-Higgs bundles, which is a holomorphic concept consisting of a holomorphic vector bundle $\EE$ with rank $n$, degree $0$ and a Higgs field $\Phi\in\mathrm{H}^0(X,\operatorname{End}(\EE)\otimes\KK_X)$, where $\KK_X$ denotes the canonical line bundle of $X$. When $G$ is a linear group, we can equip additional structure on the Higgs bundles and obtain the $G$-version non-Abelian Hodge correspondence. Moreover, there is also analogue for general real reductive Lie groups (c.f. \cite{garcia2009hitchin}). One can use the non-Abelian Hodge correspondence to deduce lots of topological properties of the character varietes of the surface group representations into the Lie group $G$. 

To get a representation from a Higgs bundle $(\EE,\Phi)$, we need to solve a PDE called the Hitchin's self-dual equation. It is with respect to the Hermitian metric $h$ on $\EE$:
\[F(\nabla^h)+[\Phi,\Phi^{*_h}]=0,\]
where $F(\nabla^h)$ denotes the curvature form of the Chern connection of $h$, and ${*_h}$ denotes the adjoint with respect to $h$. The solution gives a flat connection $\nabla^h+\Phi+\Phi^{*_h}$ and the monodromy representation $\rho$ is the desired representation. The solution metric $h$ here is called the harmonic metric and it can be illustrated as a $\rho$-equivariant harmonic map $h_\rho$ from the universal cover $\widetilde S$ of $S$ to the symmetric space of $G$. To (uniquely) solve the equation, we need the stability conditions on $(\EE,\Phi)$.

Moreover, the non-Abelian Hodge correspondence for non-compact hyperbolic surfaces of finite type has been established by Simpson, Biquard, Garc\'ia-Prada, Mundet i Riera, and many others through the study of parabolic Higgs bundles, as detailed in \cite{simpson1990harmonic} and \cite{biquard2020parabolic}. Roughly speaking, a parabolic Higgs bundle means there will be some parabolic weights at the punctures and we allow the Higgs field having some poles compatible with the weights.

In general, it is hard to check the Anosov property of a representation corresponding to a given Higgs bundle directly other than the known higher Teichm\"uller spaces or some trivial embeddings of known Anosov representations since we must solve the Hitchin's self-dual equation to get the correspondence.

In this article we will mainly focus on the case when $G=\SO_0(2,3)$. Its Lie algebra $\so(2,3)$ has two simple restricted roots $\alpha_1,\alpha_2$, where $\alpha_1$ is longer than $\alpha_2$. In \cite{collier2019geometry}, Collier, Tholozan and Toulisse considered the cyclic $\SO_0(2,3)$-Higgs bundles over a compact surface whose genus $g\geqslant2$ which can be represented by the following diagram:
\begin{equation}\label{eq:cyclic}
    \begin{tikzcd}
	{\mathcal{L}_{-2}} & {\mathcal{L}_{-1}} & {\mathcal{L}_{0}} & {\mathcal{L}_{1}} & {\mathcal{L}_{2}}
	\arrow["\tau^\vee"', from=1-1, to=1-2]
	\arrow["\beta^\vee"', from=1-2, to=1-3]
	\arrow["\beta"', from=1-3, to=1-4]
	\arrow["\tau"', from=1-4, to=1-5]
	\arrow["\gamma"', curve={height=18pt}, from=1-5, to=1-2]
	\arrow["\gamma^\vee"', curve={height=18pt}, from=1-4, to=1-1]
\end{tikzcd}
\end{equation}
with $\LL_i\cong\LL_{-i}^\vee$ and $\LL_0$ is the trivial line bundle, $\tau\colon\LL_1\to\LL_2\otimes\KK_X$ is an isomorphism and $\beta\neq0$. They can be called $\alpha_1$-cyclic Higgs bundles in the sense of \cite{labourie2017cyclic}, \cite[Section 6\& Section 7]{collier2016maximal} and \cite[Section 6]{collier2023holomorphic}. When $\beta$ is an isomorphism instead of $\tau$, such Higgs bundles have the maximal Toledo invariants and the corresponding representations are called the maximal representations. It is well-known that maximal representations are $\{\alpha_1\}$-Anosov, c.f. \cite{burger2010higher}. 

In \cite[Section 4.3]{collier2019geometry}, Collier, Tholozan, Toulisse showed that $\alpha_1$-cyclic Higgs bundles correspond to maximal fibered CFL (conformally flat Lorentz) structures on a degree $\deg(\LL_{-1})=:d$ circle bundle over $X$ whose holonomy factor through representations in the connected component of the character variety whose Toledo invariant is $2d$. By the Milnor--Wood inequality, c.f. \cite[Corollary 3.4]{burger2010surface} and \cite[Theorem 1.1]{biquard2017higgs}, we obtain that $|d|\leqslant2g-2$. When $|d|=2g-2$, these representations are maximal (hence Anosov) and when $|d|<2g-2$, these representations do not form an open domain in the representation variety. The following question was asked in \cite[Remark 4.22]{collier2019geometry}:
\begin{question}\label{question:main}
    Do $\alpha_1$-cyclic Higgs bundles give Anosov representations through the non-Abelian Hodge correspondence when $|d|<2g-2$?
\end{question}

This question is partially answered by Filip recently. In \cite{filip2021uniformization}, Filip proved it for the monodromy representations of some weight $3$ variation of the Hodge structure. Actually, it corresponds to the $\alpha_1$-cyclic Higgs bundles whose $\gamma=0$. Another markable point is that his result holds not only for compact surfaces, but also for the surfaces of finite type with a technical assumption called ``assumption A'', c.f. \cite[Definition 2.3.9]{filip2021uniformization}, with the Anosov property is changed into the relative analogue which is called ``$\log$-Anosov'', c.f. \cite[Definition 4.3.2]{filip2021uniformization}. Indeed, he proved a domination property which is equivalent to the Anosov property when the surface is compact, c.f. \cite{kapovich2018morse} and \cite{bochi2019anosov}. Since it is well-known that the Anosov property is an open condition, this also gives the Anosov property when $\gamma$ is small in some sense. However, there is no closedness for Anosov representation. Hence the Anosov property when $\gamma$ is large is still unknown.

In this article, we give a positive answer of \prettyref{question:main}:

\begin{theorem}\label{thm:coro}
    Given a compact hyperbolic Riemann surface $X$ with genus $g$. Any stable $\alpha_1$-cyclic $\SO_0(2,3)$-Higgs bundle over $X$ represented by \prettyref{eq:cyclic} with $\deg(\LL_1)<2g-2$ gives an $\{\alpha_2\}$-Anosov representation $\pi_1(X)\to\SO_0(2,3)$ through the non-Abelian Hodge correspondence. Moreover, the stability holds if and only if  $\gamma\neq0$ or $\operatorname{deg}(\LL_1)<g-1$ when $\gamma\equiv0$.
\end{theorem}

\prettyref{thm:coro} means that we construct a non-compact closed subset of Anosov representations which is unbounded in the character variety by identifying it with a family of Higgs bundles which can go to infinity in the Dolbeault moduli space. Note that when $\deg(\LL_1)=2-2g$, this also proves the $\{\alpha_2\}$-Anosov property for the Higgs bundles with vanishing quadratic differential in the $\SO_0(2,3)$-Hitchin section. When $\deg(\LL_1)=2g-2$, the corresponding representation factors through $\mathrm{O}(2,2)\times\mathrm{O}(1)$ and is $\{\alpha_1\}$-Anosov due to the maximality but not $\{\alpha_2\}$-Anosov. Please see \prettyref{rem:max} for a detailed explanation.

Similarly, our proof is also effective for non-compact surfaces and \prettyref{thm:coro} is just a special case. We generalize Filip's result with mimicking his method in the language of parabolic Higgs bundles to show some domination property of its corresponding representation. 

Now we fix a Riemann surface $X=\oX\setminus D$ equipped with a complete conformal hyperbolic metric of finite volume, where $\oX$ is a compact surface and $D$ is a finite (maybe empty) subset of $\oX$. We will also use $D$ to denote the corresponding effective divisor.

In this article, we study the $\alpha_1$-cyclic parabolic $\SO_0(2,3)$-Higgs bundles over $(\oX,D)$ (\prettyref{defn:cyclic}) and generalize Filip's ``assumption A'' (\prettyref{defn:AssumptionA}). Under our setting, an $\alpha_1$-cyclic parabolic $\SO_0(2,3)$-Higgs bundle can also be represented by the diagram \prettyref{eq:cyclic} and ``assumption A'' means a suitable choice of parabolic weights near the punctures.

Our main result is the following theorem:

\begin{theorem}\label{thm:main}
    Any stable $\alpha_1$-cyclic parabolic $\SO_0(2,3)$-Higgs bundle represented by \prettyref{eq:cyclic} satisfying assumption A of non-zero weights and $\operatorname{pardeg}(\LL_1)<\deg(\KK_\oX(D))$ gives an $\{\alpha_2\}$-almost dominated representation (\prettyref{defn:domi}) through the non-Abelian Hodge correspondence. Moreover, the stability holds if and only if $\gamma\neq0$ or $\operatorname{pardeg}(\LL_1)<\deg(\KK_{\oX}(D))/2$ when $\gamma\equiv0$.
\end{theorem}

\begin{remark}
    The condition ``non-zero weights'' is added for this positive gap. When the parabolic weight is $0$ at some punctures, we need an extra condition on $\gamma$ when following our strategy. See \prettyref{rem:zeroweight} for instance.
\end{remark}

When $X$ is compact, that is, $D=\emptyset$, the $\alpha_1$-cyclic parabolic $\SO_0(2,3)$-Higgs bundle reduces to the original $\alpha_1$-cyclic $\SO_0(2,3)$-Higgs bundle by definition. Moreover, assumption A is satisfied automatically and $\{\alpha_2\}$-almost dominated is known to be equivalent to $\{\alpha_2\}$-Anosov. Hence \prettyref{thm:coro} is a subcase of our main result \prettyref{thm:main} by taking $D=\emptyset$. 

The key point in our proof is that: the norm of $\tau$ and $\gamma$ with respect to the harmonic metric has a positive gap over the surface $X$. Although it has been known in \cite{collier2019geometry} for compact surface, we need do more careful analysis on the harmonic metric and the norm of $\gamma$ around the punctures by the model metric introduced in \cite{simpson1990harmonic}. Furthermore, this reduces to \cite[Proposition 2.2.11]{filip2021uniformization} proven by Schmid's $\mathrm{SL}_2$-orbit theorem when $\gamma\equiv0$.

To avoid discussing more on the parabolic structure on an orthogonal vector bundle of higher rank, we will mainly consider $\SO_0(2,3)$-Higgs bundles in this article. However, it is worth pointing out that our method is still effective to prove the almost-domination property for some specific parabolic $\SO_0(2,n+2)$-Higgs bundle when $n\geqslant1$. More precisely, we consider a parabolic $\SO_0(2,n+2)$-Higgs bundle has the form \prettyref{eq:cyclic} satisfying assumption A and $\operatorname{pardeg}(\LL_1)<\deg(\KK_\oX(D))$, but replace $\LL_0$ by a parabolic orthogonal vector bundle of rank $n$ with trivial determinant. When the resulting parabolic $\SO_0(2,n+2)$-Higgs bundle is stable, the corresponding representation is $\{\alpha_2\}$-almost-dominated. See \prettyref{rem:higher} for an explanation.

\paragraph{Structure of the article} We will give some preliminaries on Anosov representations and parabolic Higgs bundles in \prettyref{sec:pre}. The definition and some properties of $\alpha_1$-cyclic parabolic $\SO_0(2,3)$-Higgs bundles will be given in \prettyref{sec:Higgsestimate}. In \prettyref{sec:estimates}, we recall Filip's estimates on a certain class of Morse functions. Finally, we give the proof of our main result in \prettyref{sec:main}.

\paragraph{Acknowledgements}
The author is grateful to his advisor Qiongling Li for suggesting this problem and many helpful discussions. The author also thanks Brian Collier and Zachary Virgilio for comments on an earlier version of this paper. The author wishes to thank the referee for their valuable comments, which helped to improve the exposition substantially. The author is partially supported by the National Key R\&D Program of China No. 2022YFA1006600, the Fundamental Research Funds for the Central Universities and Nankai Zhide Foundation.

\section{Preliminaries}\label{sec:pre}

\subsection{Lie Theory Background}\label{sec:Lie}

We recommend \cite[Chapter \uppercase\expandafter{\romannumeral6}]{knapp1996lie} for the Lie theory background. Let $G$ be is a semisimple real Lie group with Lie algebra $\mathfrak{g}:=\operatorname{Lie}(G)$ and the exponential map $\exp\colon\mathfrak{g}\to G$. We fix a maximal subgroup $K$ of $G$ and let its Lie algebra be $\mathfrak{k}:=\operatorname{Lie}(K)$. This gives Cartan involutions $\Theta_G\colon G\to G$ and $\Theta_\mathfrak{g}\colon \mathfrak{g}\to\mathfrak{g}$ on both Lie group level and Lie algebra level such that $K$ and $\mathfrak{k}$ are the fixed points of $\Theta_G$ and $\Theta_\mathfrak{g}$ respectively. Now the eigenspaces decomposition of the Cartan involution $\Theta_\mathfrak{g}$ gives the Cartan decomposition $\mathfrak{g}=\mathfrak{k}\oplus\mathfrak{p}$, where $\mathfrak{p}$ is the $(-1)$-eigenspace of $\Theta_\mathfrak{g}$.

We take a maximal Abelian subspace $\mathfrak{a}$ of $\mathfrak{p}$. The adjoint action of $\mathfrak{a}$ on $\mathfrak{g}$ gives a weight space decomposition
\[\mathfrak{g}=\mathfrak{g}_0\oplus\bigoplus_{\alpha\in\Phi}\mathfrak{g}_{\alpha},\]
where $\Phi\subset\mathfrak{a}^{\vee}$ is the set of \textbf{restricted roots} of $\mathfrak{g}$ with respect to $\mathfrak{a}$.

We fix a set of positive roots $\Phi^{+}\subset\Phi$, i.e. $\Phi^+$ is contained in a half-space of $\mathfrak{a}^\vee$ and $\Phi=\Phi^+\coprod\Phi^-$, where $\Phi^-=-\Phi^+$. Let $\Delta\subset\Phi^+$ denote the corresponding set of simple roots. The associated \textbf{closed positive Weyl chamber} $\overline{\mathfrak{a}^+}\subset\mathfrak{a}$ is defined as
\[\overline{\mathfrak{a}^+}= \{v\in\mathfrak{a}\mid\alpha(v)\geqslant 0, \forall\alpha\in\Delta\}.\]

There is a decomposition of $G$ called KAK decomposition as a generalization of singular value decomposition. Explicitly, for any $g\in G$, there exists a unique $\mu(g)\in\overline{\mathfrak{a}^+}$ such that there exist two elements $k_{-}(g), k_+(g)\in K$ satisfying 
\[g=k_-(g)\exp(\mu(g))k_+(g).\]
Moreover, if there is $k_-^\prime(g),k_+^\prime(g)$ such that $g=k_-^\prime(g)\exp(\mu(g))k_+^\prime(g)$, then there is an $m\in K$ commutes with $\exp(\mu(g))$ such that $k_-^\prime(g)=k_-(g)m$ and $k_+^\prime(g)=m^{-1}k_+(g)$. The well-defined map
\[\mu\colon G\to\overline{\mathfrak{a}^+}\]
is called the \textbf{Cartan projection} of $G$.

Since the analytic Weyl group $W(G,A)$ acts freely and transitively on the Weyl chambers, there exists an element $k^{op}\in K$ such that for any $g\in G$ decomposes as $k_-(g)\exp(\mu(g))k_+(g)$, the KAK decomposition of $g^{-1}$ can be given by
\[g^{-1}=\left((k_+(g))^{-1}(k^{op})^{-1}\right)\exp\left(\operatorname{Ad}(k^{op})(-\mu(g))\right)\left(k^{op}(k_-(g))^{-1}\right),\]
where $\operatorname{Ad}\colon G\to\mathrm{GL}(\mathfrak{g})$ denotes the adjoint action of $G$. In other words, we have
\[\mu(g^{-1})=\operatorname{Ad}(k^{op})(-\mu(g)).\]
The map $\iota^{op}\colon\mu\mapsto\operatorname{Ad}(k^{op})(-\mu)$ is called the \textbf{opposition involution} on $\overline{\mathfrak{a}^+}$.

\begin{example}[Restricted Root System of $\so(p,q)$ for $p<q$]\label{example:sopq}
    We use the standard non-degenerate bilinear form
$$\begin{aligned}
Q\colon \mathbb{R}^{p+q}\times\mathbb{R}^{p+q}&\longrightarrow\mathbb{R}\\
\left(\begin{pmatrix}
x_1\\ \vdots\\x_{p+q}\end{pmatrix},\begin{pmatrix}
y_1\\ \vdots\\y_{p+q}\end{pmatrix}
\right)&\longmapsto \sum_{i=1}^px_iy_i-\sum_{j=1}^qx_{p+j}y_{p+j}
\end{aligned}$$
of signature $(p,q)$ to get the group
$$\begin{aligned}\mathrm{SO}(p,q)&=\left\{A\in \mathrm{SL}(p+q,\mathbb{R})\mid Q(x,y)=Q(Ax,Ay),\forall x,y\in\mathbb{R}^{p+q}\right\}\\&=\left\{A\in\mathrm{SL}(p+q,\mathbb{R})\mid A^{\mathrm{t}}I_{p,q}A=I_{p,q}\right\},\end{aligned}$$
where $$I_{p,q}=\begin{pmatrix}
    I_p& 0\\0& -I_q
\end{pmatrix}.$$
Then $\mathrm{SO}_0(p,q)$ is defined as the identity component of $\mathrm{SO}(p,q)$. Its Lie algebra is
\[\begin{aligned}
    \so(p,q):=&\operatorname{Lie}(\mathrm{SO}_0(p,q))=\{A\in\mathfrak{sl}(p+q,\mathbb{R})\mid A^{\mathrm{t}}I_{p,q}+I_{p,q}A=0\}\\
    =&\left\{\begin{pmatrix}
        A_{11}& A_{12}\\A_{21}& A_{22}
    \end{pmatrix}\in\mathfrak{sl}(p+q,\mathbb{R})\Bigg| A_{11}+A_{11}^{\mathrm{t}}=0,A_{22}+A_{22}^{\mathrm{t}}=0,A_{21}=A_{12}^{\mathrm{t}}\right\}
\end{aligned}\]
Below we denote that $G=\SO_0(p,q)$, $\mathfrak{g}=\so(p,q)$. We fix $K=\SO(p)\times\SO(q)$ as the maximal compact subgroup of $G$, and $\mathfrak{k}:=\operatorname{Lie}(K)=\so(p)\oplus\so(q)$. Thus the Cartan decomposition of $\mathfrak{g}$ can be expressed as
\[\begin{tikzcd}
	{\mathfrak{g}} & {\mathfrak{k}} & {\mathfrak{p}} \\
	{\begin{pmatrix}         A_{11}& A_{12}\\ A_{21}& A_{22}     \end{pmatrix}} & {\begin{pmatrix}         A_{11}& 0\\0& A_{22}     \end{pmatrix}} & {\begin{pmatrix}         0& A_{12}\\A_{21}& 0     \end{pmatrix}}
	\arrow["\in"{marking}, draw=none, from=2-1, to=1-1]
	\arrow["\oplus"{marking, pos=0.5}, draw=none, from=1-2, to=1-3]
	\arrow["\in"{marking}, draw=none, from=2-2, to=1-2]
	\arrow["{+}"{marking, pos=0.6}, draw=none, from=2-2, to=2-3]
	\arrow["\in"{marking}, draw=none, from=2-3, to=1-3]
	\arrow["{=}"{marking, pos=0.25}, draw=none, from=1-1, to=1-2]
	\arrow["{=}"{marking, pos=0.5}, draw=none, from=2-1, to=2-2]
\end{tikzcd}\]
where $A_{11}$ and $A_{22}$ are skew-symmetric real matrices and $A_{12}=A_{21}^{\mathrm{t}}$ is a real $(p\times q)$-matrix. Now we take 
\[\mathfrak{a}=\left\{A=\begin{pmatrix}
    0& A_{12}\\ A_{21}& 0
\end{pmatrix}\in\mathfrak{p}\left| A_{12}=\begin{pmatrix}
0&\cdots&    0& a_1& 0&\cdots&0\\
0&\cdots&    a_2& 0&0&\cdots&0\\
0&\iddots&0&\cdots&\cdots&\cdots&0\\
a_p&0&\cdots&\cdots&\cdots&\cdots&0
\end{pmatrix}, A_{21}=A_{12}^{\mathrm{t}}\right.\right\}.\]
Let $\theta_i\in\mathfrak{a}^\vee$ be the linear functions such that $\theta_i(A)=a_i$ for $i=1,\dots,p$. The corresponding restricted roots are
\[\Phi=\{\pm \theta_i\pm \theta_j\mid 1\leqslant i<j\leqslant p\}\cup\{\pm \theta_i\mid1\leqslant i\leqslant p\}.\]
We choose $\Delta=\{\alpha_i:=\theta_i-\theta_{i+1}\mid 1\leqslant i\leqslant p-1\}\cup\{\alpha_p:=\theta_p\}$ as the simple roots. The closed positive Weyl chamber is
\[\overline{\mathfrak{a}^+}=\left\{\begin{pmatrix}
    0& A_{12}\\ A_{21}& 0
\end{pmatrix}\in\mathfrak{a}\left| A_{12}=\begin{pmatrix}
0&\cdots&    0& a_1& 0&\cdots&0\\
0&\cdots&    a_2& 0&0&\cdots&0\\
0&\iddots&0&\cdots&\cdots&\cdots&0\\
a_p&0&\cdots&\cdots&\cdots&\cdots&0
\end{pmatrix}, a_1\geqslant\cdots\geqslant a_p\geqslant0\right.\right\}.\]
We have $\mu(g^{-1})=\mu(g)$.
\end{example}

\subsection{Anosov Representations and Almost-domination}
Any Riemann surface whose universal cover isomorphic to the upper half-plane $\mathbb{H}^2$ can be equipped with a unique complete conformal hyperbolic metric $g_{\mathrm{hyp}}$, i.e. the hyperbolic metric descended from the Poincar\'e metric on $\mathbb{H}^2$. Now let $\oX$ be a compact Riemann surface, $D\subset\oX$ be a possibly empty finite set of points. We will also denote by $D$ the corresponding effective divisor over $\overline{X}$. Let $X := \oX\setminus D$ be the corresponding punctured Riemann surface with its canonical line bundle $\KK_X$. Assume
that the Euler characteristic $\chi(X)$ of $X$ is negative. Then its universal cover is isomorphic to $\mathbb{H}^2$ and it can be equipped with a unique complete conformal hyperbolic metric $g_{\mathrm{hyp}}$. Fix a basepoint $x_0\in X$ such that with respect to the universal cover $\pi\colon \mathbb{H}^{2}\cong\widetilde{X}\to X$, $x_0$ can be lifted to $\iu=\widetilde{x_0}\in\mathbb{H}^2$. 

\begin{example}[singularity of the hyperbolic metric]\label{example:cusphyperbolic}
    For the punctured unit disk
    \[\mathbb{D}^*:=\{z\in\mathbb{C}\mid 0<|z|<1\},\]
    its universal cover is 
    \[\begin{aligned}
        \pi\colon\mathbb{H}^2\longrightarrow&\mathbb{D}^*\\
        w\longmapsto&\exp(2\pi\iu w).
    \end{aligned}\]
    The hyperbolic metric on $\mathbb{H}^2$ is $\frac{|\dd w|^2}{(\operatorname{Im}w)^2}$ and it descends to $\frac{|\dd z|^2}{(|z|\ln|z|)^2}$ on $\mathbb{D}^*$.
\end{example}

Let $\rho_{F}\colon\pi_1(X)\to\mathrm{PSL}(2,\mathbb{R})$ denote the Fuchsian representation coming from the hyperbolic metric $g_{\mathrm{hyp}}$ on $X$. For an element $\sigma\in\pi_1(X)$, we will use $\|\sigma\|$ to denote the matrix norm of $\rho_F(\sigma)$ (after choosing one of its matrix representation in $\mathrm{SL}(2,\mathbb{R})$.). One can easily check that if we identify $(\widetilde{X},\widetilde{x_0})$ with $(\mathbb{H}^2,\iu)$, then
\[\|\sigma\|=\sqrt{2\cosh(d(\widetilde{x_0},\widetilde{x_0}\cdot\sigma))},\]
where $d$ denotes the distance function on $(\widetilde{X},g_{\mathrm{hyp}})$.

The notion of almost-dominated representation was introduced in \cite{zhu2021relatively} for general relatively hyperbolic group. Here we regard $\pi_1(X)$ naturally as the relatively hyperbolic group (relative to its cusp subgroup) arising from $g_{\mathrm{hyp}}$.

\begin{definition}\label{defn:domi}
    For a fixed subset of simple restricted roots $\theta\subset\Delta$ of a semisimple real Lie group $G$, a representation $\rho\colon\pi_1(X)\to G$ is called \textbf{$\theta$-almost dominated} if there exist $C,\varepsilon > 0$ such that
    \[\alpha\left(\mu(\rho(\sigma))\right)\geqslant \varepsilon\cdot\ln\|\sigma\|-C,\forall\alpha\in\theta,\sigma\in\pi_1(X),\]
    where $\mu$ denotes the Cartan projection. Or equivalently, a representation $\rho$ is $\theta$-almost dominated if there exist $C,\varepsilon > 0$ such that \[\alpha\left(\mu(\rho(\gamma))\right)\geqslant \varepsilon\cdot d(\widetilde{x_0},\widetilde{x_0}\cdot\sigma)-C,\forall\alpha\in\theta,\sigma\in\pi_1(X).\]
\end{definition}

We have the following equivalence when the surface is compact:

\begin{fact}[\cite{kapovich2018morse,bochi2019anosov}]\label{fact:equi}
    When $X$ is compact, a representation $\rho:\pi_1(X)\to G$ is $\theta$-almost dominated if and only if it is $\theta$-Anosov.
\end{fact}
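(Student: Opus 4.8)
The plan is to reduce \prettyref{fact:equi} to the now-standard equivalence, valid for any word-hyperbolic group, between $P_\theta$-Anosov representations and a linear lower bound on the $\theta$-part of the Cartan projection, as established by Kapovich--Leeb--Porti \cite{kapovich2018morse} and Bochi--Potrie--Sambarino \cite{bochi2019anosov}. First I would observe that when $X$ is compact, $\pi_1(X)$ is word-hyperbolic and, by the Milnor--\v{S}varc lemma, the orbit map $\sigma\mapsto\widetilde{x_0}\cdot\sigma$ is a quasi-isometry from $(\pi_1(X),\lvert\cdot\rvert)$, with $\lvert\cdot\rvert$ the word metric for a fixed finite generating set, to $(\mathbb{H}^2,d)$. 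Hence $d(\widetilde{x_0},\widetilde{x_0}\cdot\sigma)$ is comparable up to multiplicative and additive constants to the word length $\lvert\sigma\rvert$, and therefore $\rho$ is $\theta$-almost dominated precisely when there exist $\varepsilon,C>0$ with
\[\alpha(\mu(\rho(\sigma)))\geqslant\varepsilon\lvert\sigma\rvert-C,\qquad\forall\alpha\in\theta,\ \forall\sigma\in\pi_1(X).\]
It then suffices to prove that this condition is equivalent to being $\theta$-Anosov.

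For the direction that $\theta$-Anosov implies the above linear growth, I would unwind the dynamical definition: the Anosov condition yields a continuous $\rho$-equivariant dominated splitting of the associated flat bundle over the geodesic flow of $X$, with a uniform exponential gap attached to each simple root $\alpha\in\theta$. Along a geodesic representative of $\sigma$, of length comparable to $d(\widetilde{x_0},\widetilde{x_0}\cdot\sigma)$, integrating the contraction rate forces the relevant singular-value gap of $\rho(\sigma)$ to grow exponentially in the flow parameter. Since $\alpha(\mu(\rho(\sigma)))$ is exactly the logarithm of this gap---for $G=\mathrm{SL}(n,\mathbb{R})$ one has $\alpha_i(\mu(g))=\log\bigl(s_i(g)/s_{i+1}(g)\bigr)$ in terms of the singular values $s_i(g)$, and the general case is analogous through the restricted-root description recalled in \prettyref{example:sopq}---one recovers the desired lower bound, with $C$ absorbing the boundary effects of the flow-box estimates.

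The substantial direction, and the main obstacle, is the converse: that the linear growth condition implies $\theta$-Anosov. This is the heart of \cite{kapovich2018morse} and \cite{bochi2019anosov}, and I would follow the Morse-theoretic route. The uniform linear divergence $\alpha(\mu(\rho(\sigma)))\to\infty$ makes the orbit map into the symmetric space $G/K$ a uniformly regular, undistorted (Morse) quasi-geodesic embedding; a local-to-global fellow-traveling argument then shows that the image of every geodesic ray in the Cayley graph has convergent $\theta$-flags, producing continuous $\rho$-equivariant boundary maps $\xi^{\pm}\colon\partial_\infty\pi_1(X)\to G/P_\theta^{\pm}$. One finally verifies transversality of $\xi^{+}$ and $\xi^{-}$ together with the contraction of parallel transport along the flow, which upgrades the purely metric estimate to the dynamical Anosov property. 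Establishing this implication from nothing more than a growth rate of the Cartan projection---in particular constructing the boundary maps and the flow contraction---is precisely where the depth of the Morse lemma and the local-to-global machinery in higher-rank symmetric spaces is needed.
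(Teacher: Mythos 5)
Your proposal is correct and takes essentially the same route as the paper: the paper offers no proof of \prettyref{fact:equi} but quotes it directly from \cite{kapovich2018morse} and \cite{bochi2019anosov}, which is exactly the equivalence you invoke after the (correct) Milnor--\v{S}varc reduction of $d(\widetilde{x_0},\widetilde{x_0}\cdot\sigma)$ to word length. Your sketches of the two directions faithfully summarize the cited arguments, with the substantial converse properly deferred to the Morse-lemma and local-to-global machinery rather than reproved.
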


\begin{remark}
    By the Milnor--\v{S}varc lemma, $\pi_1(X)$ equipped with the word length is quasi-isometric to $\widetilde{X}$ when $X$ is compact. Therefore, the notion of almost-domination does not depend on the choice of complex structure and can be expressed in terms of the word length on $\pi_1(X)$.
\end{remark}

\subsection{Parabolic Higgs Bundles}

Recall that $\oX$ is a compact Riemann surface with a finite subset $D$ (maybe empty) on it satisfying that $X:=\oX\setminus D$ has negative Euler characteristic. We fix a real semisimple Lie group $G$ with its Cartan decomposition $\mathfrak{g}=\mathfrak{k}\oplus\mathfrak{p}$. Let $K^\mathbb{C}$ be the complexification of $K$ and $\mathfrak{g}^{\mathbb{C}}=\mathfrak{k}^{\mathbb{C}}\oplus\mathfrak{p}^{\mathbb{C}}$ be the complexified Cartan decomposition. Below we freely use the notations in \prettyref{sec:Lie} and the following (vector or principal) bundles are all holomorphic. 

Suppose $M$ is a $K^{\mathbb{C}}$-set, i.e. $K^{\mathbb{C}}$ has a left action on it, then we can define the associated bundle
$$\mathbb{E}[M]=\mathbb{E}\times_{K^{\mathbb{C}}}M:=\left(\mathbb{E}\times M\right)/K^{\mathbb{C}},$$
where the $K^{\mathbb{C}}$-action on $\mathbb{E}\times M$ is
$$\begin{aligned}
    K^{\mathbb{C}}\times(\mathbb{E}\times M)&\longrightarrow \mathbb{E}\times M\\
    (k,(e,m))&\longmapsto (e\cdot k^{-1},k\cdot m).
\end{aligned}$$

The concept of parabolic $G$-Higgs bundle over $(\oX,D)$ was introduced by O. Biquard, O. Garc\'{i}a-Prada and I. M. i Riera in \cite{biquard2020parabolic}. By their definition, a parabolic $G$-Higgs bundle over $(\oX,D)$ consists of the following data:
\begin{itemize}
    \item[(1)] a parabolic principal $K^\mathbb{C}$-bundle $\mathbb{E}$ with parabolic structure $(Q_j,\zeta_j)$ at each $x_j\in D$;

    \item[(2)] a parabolic $G$-Higgs field $\Phi\in\mathrm{H}^0(X,\mathbb{E}[\mathfrak{p}^{\mathbb{C}}]\otimes\KK_X)$ with singularities of certain type around $D$, where $K^\mathbb{C}$ acts on $\mathfrak{p}^{\mathbb{C}}$ via the isotropic representation which is restricted from the adjoint action $K^\mathbb{C}\to\operatorname{Ad}(\mathfrak{g}^{\mathbb{C}})$.
\end{itemize}

In this section we will use the definition of parabolic $G$-Higgs bundle introduced in \cite[Section 2 \& Section 4]{biquard2020parabolic} for general real reductive Lie group $G$ and illustrate the \textbf{parabolic $G$-Higgs bundle} and the \textbf{stability condition} from the viewpoint of vector bundles for $G=\mathrm{SL}(n,\mathbb{C})$ and our case $G=\SO_0(p,q)$. 

We first define the following notations.

\begin{definition}\label{defn:flag}
            Suppose $V$ is a $\mathbb{C}$-linear space. A sequence of subspaces of $V$
            \[0=F_{k}\subsetneq F_{k-1}\subsetneq\cdots\subsetneq F_2 \subsetneq F_{1}=V, \quad(\mbox{resp. }0=F_{1}\subsetneq F_{2}\subsetneq\cdots \subsetneq F_{k-1}\subsetneq F_{k}=V)\]
            is called a \textbf{reverse flag} (resp. \textbf{flag}). If $V$ is equipped with a bilinear form $Q$, then the above reverse flag (resp. flag) is called a \textbf{reverse isotropic flag} (resp. \textbf{isotropic flag}) if every $F_i$ is isotropic or coisotropic under $Q$ and $F_i=(F_{k+1-i})^{\perp_Q}$.
            
            We say a reverse flag $\left(F_i\right)_{i=1}^k$ is \textbf{equipped with decreasing real numbers $\left(\zeta_j\right)_{j=1}^{\dim(V)}$} if $\zeta_j\geqslant\zeta_{j+1}$, 
            \[\zeta_{\dim(V)-\dim(F_i)+1}=\zeta_{\dim(V)-\dim(F_i)+2}=\cdots=\zeta_{\dim(V)-\dim(F_{i+1})}=:\widetilde{\zeta}_i\]
            and $\widetilde{\zeta}_i>\widetilde{\zeta}_{i+1}$
            for any $i=1,\dots,k-1$. For our convenience, we usually set $\widetilde{\zeta}_0=\zeta_0=\widetilde{\zeta}_{\dim(V)+1}=\zeta_{\dim(V)+1}=0$. $\operatorname{Gr}_{\widetilde{\zeta}_i}(V):=F_i/F_{i+1}$ are called the \textbf{graded pieces} of $V$.
            
            A basis $\{e_1,\dots,e_{\dim(V)}\}$ of $V$ is called \textbf{compatible with a reverse flag $\left(F_i\right)_{i=1}^{k}$} if \[e_{\dim(V)-\dim(F_{i})+1},\dots,e_{\dim(V)}\] span $F_i$ for any $i=1,\dots,k-1$.
        \end{definition}

For our convenience, the parabolic weights will be chosen in a smaller set than the set $\iu\overline{\mathcal{A}}$ in the definition of parabolic principal bundle in \cite[Section 2.1]{biquard2020parabolic} to avoid the parahoric objects mentioned in \cite[Section 3]{biquard2020parabolic}. A similar setting was adopted in \cite{feng2023compact} when $G=\SO_0(2,q)$.

\paragraph{Parabolic $\mathrm{SL}(n,\mathbb{C})$-Higgs Bundle}

When $G=\mathrm{SL}(n,\mathbb{C})$, we take $K=\SU(n)$, then $K^\mathbb{C}$ is also $\mathrm{SL}(n,\mathbb{C})$. For a parabolic $\mathrm{SL}(n,\mathbb{C})$-Higgs bundle $(\mathbb{E},\Phi)$, from the viewpoint of the vector bundle $\mathbb{E}[\mathbb{C}^n]$ induced by the standard action, we obtain that a parabolic $\mathrm{SL}(n,\mathbb{C})$-Higgs bundle is equivalent to the following data:
\begin{itemize}
    \item[(1)] a holomorphic vector bundle $\EE\to\oX$, with $\operatorname{rank}(\EE)=n$ and $\det(\EE)=\mathcal{O}_{\oX}$ which is the trivial line bundle over $\oX$;

    \item[(2)] a reverse flag $\left(\EE_i^j\right)$ of $\EE_{x_j}$ equipped with decreasing real numbers $\left(\zeta_k^j\right)_{1\leqslant k\leqslant n}$ (called \textbf{parabolic weights}) satisfying that 
    \[\sum_{k=1}^n\zeta_k^j=0\mbox{\quad and\quad }\zeta_k^j-\zeta_l^j\in[0,1),\forall 1\leqslant l<k\leqslant n\]
    for every marked point $x_j\in D$; 
    
    \item[(3)]
    a Higgs field $\Phi$ which is a holomorphic section of $\operatorname{End}(\mathcal{E})\otimes\mathcal{K}(D)$ such that $\operatorname{tr}(\Phi)=0$ and with respect to a coordinate chart $(U,z)$ centered at $x_j$, a holomorphic frame $\{e_1,\dots,e_n\}$ compatible with the reverse flag $\left(\mathcal{E}_i^j\right)$
\begin{equation}\label{eq:Higgsfield}
        \Phi=\sum_{k=1}^n\sum_{l=1}^nO\left(z^{\left\lceil\zeta_k^j-\zeta_l^j\right\rceil}\right)\cdot e_k\otimes e_l^\vee\otimes\dfrac{\dd z}{z}.
\end{equation}
which means $\Phi$ is allowed to have a pole of order $1$ at $x_j$ along $e_k\otimes e_l^\vee$ when $\zeta_k^j\leqslant\zeta_l^j$ and must be holomorphic at $x_j$ along $e_k\otimes e_l^\vee$ when $\zeta_k^j>\zeta_l^j$.
\end{itemize}

\begin{remark}
    This convention may be confusing when compared to the well-known definition of filtered regular Higgs bundle in \cite{simpson1990harmonic} or parabolic Higgs bundle in \cite{yokogawa1993compactification}. When using the well-known definition of parabolic Higgs bundle, the underlying vector bundle may not have trivial determinant. Here we just apply a translation on the weights to make the underlying vector bundle have trivial determinant. 
\end{remark}

Now an automorphism of a parabolic $\mathrm{SL}(n,\mathbb{C})$-Higgs bundle $(\EE,\EE_i^j,\zeta_k^j,\Phi)$ is an automorphism of $\EE$ which stablizes $\Phi$ and preserves the reverse flag $\EE_i^j$.

The parabolic degree defined below will be used to test the stability condition.

\begin{definition}
            For any holomorphic subbundle $\mathcal{E}^\prime$ of a parabolic $\mathrm{SL}(n,\mathbb{C})$-Higgs bundle $(\mathcal{E},\mathcal{E}_{i}^j,\zeta_k^j,\Phi)$, we define the \textbf{parabolic degree} of $\mathcal{E}^\prime$ as
    \[\operatorname{pardeg}(\mathcal{E}'):=\deg(\mathcal{E}')-\sum_{j=1}^{\deg(D)}\sum_{i}(\widetilde{\zeta}_{i}^j-\widetilde{\zeta}_{i-1}^j)\dim\left(\left(\mathcal{E}'\right)_{x_j}\cap\mathcal{E}_i^j\right),\]
    where we assume $\widetilde{\zeta}_{0}^j=0$.
\end{definition}

\begin{remark}
    Note that here we use ``$-$'' connecting the degree part and the parabolic part instead of ``$+$'' appearing in the well-known definition since we use the reverse flag and decreasing weights.
\end{remark}

\begin{definition}
    A parabolic $\mathrm{SL}(n,\mathbb{C})$-Higgs bundle $(\EE,\Phi)$ is \textbf{semistable} if for any proper holomorphic subbundle $\EE^\prime\subset\EE$ which is $\Phi$-invariant, $\operatorname{pardeg}(\EE')\leqslant0$. Furthermore, it is \textbf{stable} if the above inequality is strict when $\EE'\neq0,\EE$.
\end{definition}

\paragraph{Parabolic $\mathrm{SO}_0(p,q)$-Higgs bundle}

When $G=\SO_0(p,q)$, we take \[K=\SO(p)\times\SO(q),\] then $K^\mathbb{C}=\mathrm{SO}(p,\mathbb{C})\times\mathrm{SO}(q,\mathbb{C})$. For a parabolic $\SO_0(p,q)$-Higgs bundle $(\mathbb{E},\Phi)$, from the viewpoint of the vector bundle $\mathbb{E}[\mathbb{C}^p\oplus\mathbb{C}^q]$ induced by the standard action, we obtain that a parabolic $\SO_0(p,q)$-Higgs bundle is equivalent to the following data:
\begin{itemize}
    \item[(1)] a holomorphic vector bundle: $\EE=\UU\oplus\VV$ over $\oX$, where $\operatorname{rank}(\UU)=p$ and $\operatorname{rank}(\VV)=q$ with $\det(\UU)=\det(\VV)=\mathcal{O}_{\oX}$ which is the trivial line bundle over $\oX$;

    \item[(2)] two holomorphic symmetric non-degenerate bilinear forms $Q_\UU\colon\operatorname{Sym}^2(\UU)\to\mathcal{O}$ and $Q_\VV\colon\operatorname{Sym}^2(\VV)\to\mathcal{O}$ with the induced isomorphisms $q_\UU\colon\UU\to\UU^\vee$ and $q_\VV\colon\VV\to\VV^\vee$;

    \item[(3)] a reverse isotropic flag $\left(\UU_s^j\right)$ (resp. $\left(\VV_t^j\right)$) of $\UU_{x_j}$ (resp. $\VV_{x_j}$) with respect to $Q_\UU$ (resp. $Q_\VV$) equipped with decreasing real numbers $\left(\zeta_k^j\right)_{1\leqslant k\leqslant p}$ and (resp. $\left(\eta_l^j\right)_{1\leqslant l\leqslant q}$) (called \textbf{parabolic weights}) satisfying that $\zeta_k^j,\eta_l^j\in(-1/2,1/2)$ and 
    \[\zeta_k^j+\zeta_{p+1-k}^j=0,\qquad \eta_l^j+\eta_{q+1-l}^j=0,\]
    for every marked point $x_j\in D$; 
    
    \item[(4)]
    a Higgs field $\Phi=\begin{pmatrix}
    0&\gamma\\
    -\gamma^*&0
\end{pmatrix}$ with respect to the decomposition $\EE=\UU\oplus\VV$ which is a holomorphic section of $\operatorname{End}(\mathcal{E})\otimes\mathcal{K}(D)$, where $\gamma^*=q_{\VV}^{-1}\circ\gamma^{\vee}\circ q_{\UU}$, such that with respect to a coordinate chart $(U,z)$ centered at $x_j$, a holomorphic frame $\{e_1,\dots,e_p\}$ (resp. $\{f_1,\dots,f_q\}$) compatible with the reverse isotropic flag $\left(\mathcal{U}_s^j\right)$ (resp. $\left(\VV_t^j\right)$), \[\gamma=\sum_{k=1}^p\sum_{l=1}^qO\left(z^{\left\lceil\zeta_k^j-\eta_l^j\right\rceil}\right)\cdot e_k
\otimes f_l^\vee\otimes\dfrac{\mathrm{d}z}{z},\]
which means $\gamma$ is allowed to have a pole of order $1$ at $x_j$ along $e_k\otimes f_l^\vee$ when $\zeta_k^j\leqslant\eta_l^j$ and must be holomorphic at $x_j$ along $e_k\otimes f_l^\vee$ when $\zeta_k^j>\eta_l^j$.
\end{itemize}

Therefore, a parabolic $\SO_0(p,q)$-Higgs bundle can be viewed as a parabolic $\mathrm{SL}(p+q,\mathbb{C})$-Higgs bundle naturally. But the stability condition for parabolic $\SO_0(p,q)$-Higgs bundles are different.

\begin{definition}\label{defn:stable}
    A parabolic $\mathrm{SO}_0(p,q)$-Higgs bundle $(\UU\oplus\VV,Q_{\UU},Q_{\VV},\Phi)$ ($2\leqslant p<q$) is \textbf{semistable} if for any isotropic subbundle $\UU^\prime\oplus\VV^\prime\subset\EE$ which is $\Phi$-invariant, $\operatorname{pardeg}(\UU^\prime\oplus\VV^\prime)\leqslant0$. Furthermore, it is \textbf{stable} if the above inequality is strict when
    \[\begin{cases}\UU^\prime\oplus\VV^\prime\neq0& \mbox{ when }p>2,\\
        \VV^\prime\neq0& \mbox{ when }p=2.
    \end{cases}\]
\end{definition}

\begin{remark}
    The difference between $p=2$ and $p>2$ arises because $\SO(2,\mathbb{C})$ has no non-trivial parabolic subgroup. Hence $\SO(2,\mathbb{C})$ would not give any proper reduction. Please see \cite[Proposition 2.16]{aparicio2019so}. One could also find a detailed argument to obtain the stability condition of parabolic $\SO_0(2,q)$-Higgs bundles via vector bundle in \cite[Section 3]{feng2023compact}.
\end{remark}

\subsection{Hitchin--Kobayashi Correspondence}\label{sec:HK}

The Hitchin--Kobayashi correspondence, which associates a harmonic metric and a representation $\rho\colon\pi_1(X)\to G$ to (poly-)stable parabolic $G$-Higgs bundle, was proven in \cite{simpson1990harmonic} for filtered regular Higgs bundles and in \cite[Theorem 5.1 \& Section 6.1]{biquard2020parabolic} for general parabolic $G$-Higgs bundles. In this section, we will explain what it means for $G=\mathrm{SL}(n,\mathbb{C})$ and $\SO_0(p,q)$. 

We first explain what the metric means for a parabolic $G$-Higgs bundle when $G=\mathrm{SL}(n,\mathbb{C})$ and $\SO_0(p,q)$. Recall that $K$ denotes the maximal compact subgroup of $G$ and $K^\mathbb{C}$ denotes its complexification. For a parabolic $G$-Higgs bundle $(\mathbb{E},Q_j,\zeta_j,\Phi)$, a metric $h$ on the parabolic principal $K^\mathbb{C}$-bundle $\mathbb{E}$ means a global smooth section of $\mathbb{E}\left[K\backslash K^{\mathbb{C}}\right]$. 

Let the standard basis of $\mathbb{C}^n$ be $(\varepsilon_i)$. Below we will call a basis $(e_i)$ of $\mathbb{C}^n$ is a \textbf{unit-basis} if $e_1\wedge\cdots\wedge e_n=\varepsilon_1\wedge\cdots\wedge\varepsilon_n$. Similarly, for a vector bundle $\EE$ whose determinant bundle is trivial, a frame $(e_i)$ of $\EE$ is called a \textbf{unit-frame} if $e_1\wedge\cdots\wedge e_n$ is the standard section of $\mathcal{O}\cong\det(\EE)$. 

Any Hermitian metric $h$ with standard volume on $\mathbb{C}^n$ corresponds to a positive definite Hermitian matrix $\left(h(\varepsilon_k,\varepsilon_l)\right)_{1\leqslant k,l\leqslant n}$ with determinant $1$. Hence there is a natural correspondence given by
\[\begin{aligned}
    \SU(n)\backslash\mathrm{SL}(p,\mathbb{C})&\longrightarrow\{\mbox{Hermitian metric with standard volume on }\mathbb{C}^n\}\\
    \SU(p)\cdot g&\longmapsto \overline{g^\mathrm{t}}g.
\end{aligned}\]
Therefore, the metric $h$ on a parabolic $\mathrm{SL}(n,\mathbb{C})$-Higgs bundle $(\mathbb{E},\Phi)$ corresponds to an Hermitian metric $h_{\EE}$ on $\EE=\mathbb{E}[\mathbb{C}^n]$ whose induced metric on $\det(\EE)\cong\mathcal{O}$ is the standard trivial metric. 

For a parabolic $\SO_0(p,q)$-Higgs bundle $(\UU,\VV,Q_\UU,Q_\VV,\Phi)$, the harmonic metric $h$, which is a global smooth section of \[\mathbb{E}[(\SO(p)\times\SO(q))\backslash(\SO(p,\mathbb{C})\times\SO(q,\mathbb{C}))],\]
corresponds to two Hermitian metrics $h_{\UU}$ and $h_{\VV}$ on $\UU$ and $\VV$ which are compatible with the orthogonal structures $Q_{\UU}$ and $Q_{\VV}$ in the following sense:
\[h_{\UU}=(q_{\UU})^*(h_{\UU}^\vee),h_{\VV}=(q_{\VV})^*(h_{\VV}^\vee),\]
where $h_{\UU}^\vee$ and $h_{\VV}^\vee$ are the dual metric defined on $\UU^\vee$, $\VV^\vee$ respectively and the harmonic metric on $\EE=\UU\oplus\VV$ is just $h_\EE=h_\UU\oplus h_\VV$.

Now we define the \textbf{model metric} for a parabolic $\mathrm{SL}(n,\mathbb{C})$-Higgs bundle $(\EE,\EE_i^j,\zeta_k^j,\Phi)$. This concept was introduced in \cite[Section 5]{simpson1990harmonic} for filtered regular Higgs bundles and generalized in \cite[Section 5.1]{biquard2020parabolic} for general parabolic $G$-Higgs bundles. We recommend \cite{kim2018analytic} and \cite[Section 5.1]{biquard2020parabolic} as references here.

For a local coordinate chart $(U,z)$ centered at a puncture $x_j\in D$, the Higgs field $\Phi$ can be represented as $\phi(z)\dd z/z$, where $\phi$ is a holomorphic endomorphism of $\EE|_U$. The \textbf{residue} of $\Phi$ at $x_j$ is defined as 
\[\operatorname{Res}_{x_j}\Phi:=\phi(0).\]
By \prettyref{eq:Higgsfield}, we obtain that $\operatorname{Res}_{x_j}\Phi$ preserves the reverse flag $\left(\EE_i^j\right)$. Denote the induced endomorphism of $\operatorname{Res}_{x_j}\Phi$ on the graded pieces $\bigoplus_{\delta\in\mathbb{R}}\operatorname{Gr}_\delta(\EE_{x_j})$ (\prettyref{defn:flag}) by the \textbf{graded residue} $\operatorname{GrRes}_{x_j}\Phi$. The graded piece further splits as a direct sum 
\[\operatorname{Gr}_\delta(\EE_{x_j})=\bigoplus_{\lambda\in\mathbb{C}}\operatorname{Gr}_{\delta}^\lambda(\EE_{x_j})\]
with respect to the generalized eigenspaces of $\operatorname{GrRes}_{x_j}\Phi$. We take the nilpotent part $Y_\delta$ (with $Y_{\delta}^{m+1}=0$ for some $m\geqslant0$) of the graded residue on each $\operatorname{Gr}_\delta(\EE_{x_j})$. 

The $Y_\delta$ then induces a (unique) further filtration $\{W_r\operatorname{Gr}^\lambda_\delta(\EE_{x_j})\}_{r\in\mathbb{Z}}$ called the \textbf{weight filtration} with corresponding grading
\begin{equation}\label{eq:weightfiltration}
\operatorname{Gr}_\delta(\EE_{x_j})=\bigoplus_{r\in\mathbb{Z}}\bigoplus_{\lambda\in\mathbb{C}}\operatorname{Gr}_r\operatorname{Gr}_\delta^\lambda(\EE_{x_j}),
\end{equation}
where $\operatorname{Gr}_r=W_r/W_{r-1}$, which satisfy that
\begin{itemize}
    \item[(1)] $0\subset W_{-m}\operatorname{Gr}^\lambda_\delta(\EE_{x_j})\subset\cdots\subset W_m\operatorname{Gr}^\lambda_\delta(\EE_{x_j})=\operatorname{Gr}^\lambda_\delta(\EE_{x_j})$;

    \item[(2)] $Y_{\delta}(W_r\operatorname{Gr}^\lambda_\delta(\EE_{x_j}))\subset W_{r-2}\operatorname{Gr}^\lambda_\delta(\EE_{x_j})$;

    \item[(3)] $Y_{\delta}^r$ induces an isomorphism between $\operatorname{Gr}_{r}\operatorname{Gr}^\lambda_\delta(\EE_{x_j})$ and $\operatorname{Gr}_{-r}\operatorname{Gr}^\lambda_\delta(\EE_{x_j})$.
\end{itemize}

Therefore if we define the diagonal endomorphism 
\[H_\delta=\sum_{r\in\mathbb{Z}}r\cdot \operatorname{id}_{\operatorname{Gr}_r\operatorname{Gr}_{\delta}^\lambda(\EE_{x_j})},\]
then $[H_\delta, Y_\delta] =-2Y_\delta$. Then there exists an endomorphism $X_\delta$ such that
$(H_\delta,X_\delta,Y_\delta)$ is an $\mathfrak{sl}_2$-triple, i.e. we also have $[H_\delta, X_\delta] = 2X_\delta$ and $[X_\delta, Y_\delta] = H_\delta$.
Now choose an initial metric $h_{x_j}$ on $\EE_{x_j}$ such that the subspaces $\operatorname{Gr}_\delta(\EE_{x_j})$ are orthogonal and such that $H_\delta$ is self-adjoint and $X_\delta$ is adjoint with $Y_\delta$ with respect to $h_{x_j}$. 

Given a trivialization of $\EE\to\oX$ in a coordinate chart $(U,z)$ centered at $x_j$ with a projection $\pi\colon U\to\{x_j\}$, we can pullback by $\pi$ to extend the weight filtration to this chart.

\begin{definition}\label{defn:modelmetric}
    The \textbf{model metric} on $\EE|_{U\setminus\{x_j\}}$ is defined as
    \[h_{\mathrm{mod}}:=\bigoplus_{r,\delta,\lambda}|z|^{-2\delta}\cdot\left|\ln|z|\right|^r\cdot(\pi^*h_{x_j})|_{\operatorname{Gr}_r\operatorname{Gr}_\delta^\lambda(\EE_{x_j})}.\]
\end{definition}

Now we can extend it to a global metric $h_{\mathrm{mod}}$ on $\EE|_X$. Furthermore, if $(\EE,\Phi)$ arises from a parabolic $\SO_0(p,q)$-Higgs bundle, we can make $h_{\mathrm{mod}}$ be a metric on this parabolic $\SO_0(p,q)$-Higgs bundle.

Finally, we give the statement of the Hitchin--Kobayashi correspondence proven in \cite[Theorem 5.1 \& Section 6.1]{biquard2020parabolic}. The statement involves the concepts for general parabolic $G$-Higgs bundles, but we only focus on the case when $G=\mathrm{SL}(n,\mathbb{C})$ or $\SO_0(p,q)$. Note that a metric $h$ on a principal $K^\mathbb{C}$-bundle $\mathbb{E}$ gives a reduction from $\mathbb{E}$ to a principal $K$-bundle $\mathbb{E}_h$. 

\begin{fact}\label{fact:harmonicmetricG}
    For any stable parabolic $G$-Higgs bundle $(\mathbb{E},Q_j,\zeta_j,\Phi)$, there exists a harmonic metric $h$, which is a global smooth section of $\mathbb{E}\left[K\backslash K^{\mathbb{C}}\right]$ satisfying that
    \begin{equation}\label{eq:Hitchin}
        F(\nabla^h)-[\Phi,\tau_h(\Phi)]=0,
    \end{equation}
    or equivalently,
    \[\mathrm{D}^h=\nabla^h+\Phi-\tau_h(\Phi)\]
    is a flat $G$-connection on the principal $G$-bundle obtained by extending the structure group of $\mathbb{E}_h$ to $G$ via the embedding $K\hookrightarrow G$, where $\nabla^h$ denotes unique connection compatible with the holomorphic structure of $\mathbb{E}$ and the metric $h$, $F(\nabla^h)$ denotes its curvature and $\tau_h$ is the conjugation on $\Omega^{1,0}(\mathbb{E}[\mathfrak{p}^\mathbb{C}])$ defined by combining the metric $h$ and the standard conjugation on $X$ from $(1,0)$-forms to $(0,1)$-forms and $h$ is quasi-isometric to the model metric $h_{\mathrm{mod}}$. Moreover, such harmonic metric is unique up to an automorphism of $(\mathbb{E},Q_j,\zeta_j,\Phi)$.
\end{fact}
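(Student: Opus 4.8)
This is the parabolic $G$-version of the Hitchin--Kobayashi correspondence, and the plan splits into four parts: (i) the equivalence of \eqref{eq:Hitchin} with flatness of $\mathrm{D}$, (ii) existence of a solution under the stability hypothesis, (iii) quasi-isometry of the solution to the model metric, and (iv) uniqueness up to automorphism. Part (i) is purely formal and is forced by the dimension. Writing $\Psi=\Phi-\tau_h(\Phi)$ and expanding $F(\mathrm{D})=R(h)+d_{A(h)}\Psi+\Psi\wedge\Psi$, the $(2,0)$ and $(0,2)$ components vanish because $\oX$ is a curve; holomorphicity of $\Phi$ gives $\bar\partial_{\mathbb{E}}\Phi=0$, and the defining property of $\tau_h$ together with the compatibility of the Chern connection $A(h)$ with $h$ gives $\partial_{A(h)}\tau_h(\Phi)=0$, so the derivative term drops out; finally the pure parts of $\Psi\wedge\Psi$ vanish on a curve and only the mixed contribution $-[\Phi,\tau_h(\Phi)]$ survives. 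Hence $F(\mathrm{D})=R(h)-[\Phi,\tau_h(\Phi)]$, and flatness of $\mathrm{D}$ is equivalent to \eqref{eq:Hitchin}.

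For existence I would first dispose of the structure-group bookkeeping. Choosing a faithful complex representation $K^{\mathbb{C}}\hookrightarrow\mathrm{GL}(V)$ compatible with the isotropy action on $\mathfrak{p}^{\mathbb{C}}$, the datum $(\mathbb{E},\Phi)$ induces an ordinary parabolic Higgs bundle on $(\oX,D)$, and one checks that $G$-stability of $(\mathbb{E},Q_j,\alpha_j,\Phi)$ forces polystability of this associated parabolic Higgs bundle. The analytic core is then the existence of a Hermitian--Einstein metric for a stable parabolic Higgs bundle over the non-compact curve $X$. I would fix a background metric $h_0$ equal to Simpson's model metric near each $x_j$, with singular behaviour prescribed by the weights $\alpha_j$ and the residue of $\Phi$, write each competitor as $h=h_0 e^{s}$ with $s$ a self-adjoint section tangent to the metric bundle $\mathbb{E}[K\backslash K^{\mathbb{C}}]$, and run Donaldson's heat flow for \eqref{eq:Hitchin} in weighted function spaces adapted to $h_0$. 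Short-time existence is standard parabolic theory, and both long-time existence and subconvergence of the flow reduce to a uniform $C^0$ bound on $s$. Crucially, the flow direction $R(h)-[\Phi,\tau_h(\Phi)]$ is self-adjoint, hence tangent to the locus of genuine $K$-reductions, so the limit is automatically a $G$-harmonic metric rather than merely a Hermitian one; alternatively one invokes uniqueness for the $\mathrm{GL}(V)$-problem together with the real structure cutting out $G\subset\mathrm{GL}(V)$.

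The step I expect to be the main obstacle is this $C^0$ estimate carried out in the weighted analysis near the punctures. On compact subsets away from $D$ the maximum principle fed by Simpson's main estimate controls $\sup|s|$; were the flow to fail to converge, a Uhlenbeck--Yau type blow-up of $s$ would produce in the limit a $\Phi$-invariant weakly holomorphic parabolic subsheaf of non-negative parabolic degree, contradicting stability. The genuinely new difficulty over the compact case is that every estimate must be performed in weighted Sobolev/H\"older norms in which the model metric $h_0$ makes \eqref{eq:Hitchin} asymptotically solved at each cusp; controlling the decay of $s$ in these norms is exactly what guarantees that $h$ remains quasi-isometric to $h_0$, yielding part (iii).

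Finally, uniqueness follows from convexity. The Donaldson functional $\mathcal{M}(h_0,\cdot)$ is geodesically convex along the curves $h_t=h_0 e^{ts}$ in the space of metrics, and its Euler--Lagrange equation is precisely \eqref{eq:Hitchin}. If $h_1$ and $h_2$ both solve \eqref{eq:Hitchin}, then $\mathcal{M}$ is affine along the geodesic joining them while both endpoints are critical, which forces the generating $s$ to be parallel for $A(h)$ and to commute with $\Phi$ and $\tau_h(\Phi)$, i.e. to be an infinitesimal automorphism of the parabolic $G$-Higgs bundle; then $\exp(s)$ is the automorphism carrying $h_1$ to $h_2$. Stability ensures these are the only flat directions, which closes the uniqueness clause.
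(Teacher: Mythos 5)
The paper offers no proof of this statement at all: it is imported verbatim as a known Fact from Biquard--Garc\'ia-Prada--Mundet i Riera \cite{biquard2020parabolic} (Theorem 5.1 and Section 6.1), building on Simpson's work for $\mathrm{SL}(n,\mathbb{C})$. Your outline --- the formal check that on a curve the flatness of $\mathrm{D}$ reduces to the Hitchin equation, existence via the Donaldson functional/heat flow in weighted spaces modeled on Simpson's model metric near the punctures, a $C^0$ estimate whose failure produces a $\Phi$-invariant destabilizing parabolic subsheaf contradicting stability, quasi-isometry to the model metric from the weighted decay of the endomorphism, and uniqueness by geodesic convexity of the Donaldson functional --- is essentially the strategy of that cited proof, so it is the same approach as the paper's (delegated) argument.
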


\begin{remark}
    Two metrics $g_1,g_2$ on a vector bundle are called \textbf{quasi-isometric} (or \textbf{mutually bounded}) if there exists a constant $c\geqslant1$ such that $\|v\|_{g_1}/\|v\|_{g_2}\in[1/c,c]$ for any nonzero vector $v$. Any two smooth metrics are quasi-isometric when the base space is compact.
\end{remark}

Note that when $G=\mathrm{SL}(n,\mathbb{C})$ or $\SO_0(p,q)$, the conjugation $\tau_h(\Phi)$ of $\Phi$ is just $-\Phi^{*_{h_\EE}}$, where $*_{h_\EE}$ denotes the adjoint with respect to the metric $h_{\EE}$.

\begin{definition}
    The map $\mathsf{NAH}$ sending a stable parabolic $G$-Higgs bundle to the monodromy representation $\pi_1(X)\to G$ of the flat principal $G$-bundle induced by the harmonic metric (\prettyref{fact:harmonicmetricG}) is called the \textbf{non-Abelian Hodge correspondence}.
\end{definition}

Hence the process from a polystable $\mathrm{SO}_0(p,q)$-Higgs bundle $(\mathbb{E},\Phi)$ equipped with the harmonic metric to a flat principal $\SO_0(p,q)$-bundle follows the steps below:
\begin{itemize}
    \item[(1)] the $\SO(p,\mathbb{C})\times\SO(q,\mathbb{C})$-principal bundle corresponds the orthonormal unit-frame bundle of $\UU$ and $\VV$ with respect to the symmetric bilinear forms $Q_\UU$, $Q_\VV$ respectively;

    \item[(2)] the $\SO(p)\times\SO(q)$-principal bundle obtained from the reduction $h$ corresponds the product of the orthonormal unit-frame bundle of $\UU$ with respect to $Q_\UU,h_{\UU}$ simultaneously and the orthonormal unit-frame bundle of $\VV$ with respect to $Q_\VV,h_{\VV}$ simultaneously; furthermore this gives real subbundles $\UU_{\mathbb{R}}$ and $\VV_{\mathbb{R}}$ of $\UU$ and $\VV$ respectively and $h_{\UU}|_{\UU_{\mathbb{R}}}=Q_{\UU}|_{\UU_{\mathbb{R}}}$, $h_{\VV}|_{\VV_{\mathbb{R}}}=Q_{\VV}|_{\VV_{\mathbb{R}}}$;

    \item[(3)] the $\SO_0(p,q)$-principal bundle corresponds to the orthonormal unit-frame bundle of $\EE_{\mathbb{R}}=\UU_{\mathbb{R}}\oplus\VV_{\mathbb{R}}$ with respect to the indefinite bilinear metric $(h_\UU\oplus(-h_{\VV}))|_{\EE_{\mathbb{R}}}$ whose signature is $(p,q)$. The flat connection here comes from $\nabla^{h_\EE}+\Phi+\Phi^{*_{h_\EE}}$, where $\nabla^{h_\EE}$ denotes the Chern connection of $h_\EE$.
\end{itemize}

\section{\texorpdfstring{$\alpha_1$}{α1}-cyclic parabolic \texorpdfstring{$\SO_0(2,3)$}{SO0(2,3)}-Higgs Bundles}\label{sec:Higgsestimate}

In this article, we will mainly consider the parabolic $\alpha_1$-cyclic parabolic $\SO_0(2,3)$-Higgs bundles over $(\oX,D)$, where $\oX$ is a compact Riemann surface with a finite subset $D$ (maybe empty) on it satisfying that $X:=\oX\setminus D$ has negative Euler characteristic.

\subsection{Definitions and Basic Properties}

\begin{definition}\label{defn:cyclic}
    A parabolic $\SO_0(2,3)$-Higgs bundle $(\UU,\VV,Q_\UU,Q_\VV,\Phi)$ over $(\oX,D)$ is called \textbf{$\alpha_1$-cyclic} if it satisfies the following properties:
    \begin{itemize}
        \item[(1)] $\UU=\LL_{-1}\oplus\LL_1$, $\VV=\LL_{-2}\oplus\LL_0\oplus\LL_2$ for some holomorphic line bundles $\LL_i$, where $\LL_{-i}\cong\LL_i^{\vee}$;

        \item[(2)] $Q_\UU=\begin{pmatrix}
            0&1\\1&0
        \end{pmatrix}$ and $Q_\VV=\begin{pmatrix}
            0&0&-1\\
            0&-1&0\\
            -1&0&0
        \end{pmatrix},$ where all $1$'s are given by the natural pairing;

        \item[(3)] the Higgs field $\Phi$ is of the following form
        \[\Phi=\begin{pmatrix}
            0&0&0&\gamma^{\vee}&0\\
            \tau^\vee&0&0&0&\gamma\\
            0&\beta^\vee&0&0&0\\
            0&0&\beta&0&0\\
            0&0&0&\tau&0
        \end{pmatrix}\]
        with respect to the decomposition $\EE=\bigoplus_{i=-2}^2\LL_i$, where $\tau|_X\colon\LL_1|_X\to\LL_2|_X\otimes\KK_{X}$ is an isomorphism and $\beta\neq0$.
    \end{itemize}
\end{definition}

\begin{remark}
    If $\beta\equiv0$, the target Lie group reduces into $\SO_0(2,2)$.
\end{remark}

We will always use the following graph to denote the $\alpha_1$-cyclic parabolic $\SO_0(2,3)$-Higgs bundle mentioned below.
\begin{equation}\label{eq:cyc}
    (\EE,\Phi)=\begin{tikzcd}
	{\mathcal{L}_{-2}} & {\mathcal{L}_{-1}} & {\mathcal{L}_{0}} & {\mathcal{L}_{1}} & {\mathcal{L}_{2}}
	\arrow["\tau^\vee"', from=1-1, to=1-2]
	\arrow["\beta^\vee"', from=1-2, to=1-3]
	\arrow["\beta"', from=1-3, to=1-4]
	\arrow["\tau"', from=1-4, to=1-5]
	\arrow["\gamma"', curve={height=18pt}, from=1-5, to=1-2]
	\arrow["\gamma^\vee"', curve={height=18pt}, from=1-4, to=1-1]
\end{tikzcd}
\end{equation}

The following definition generalizes the ``assumption A'' defined in \cite[Definition 2.1.9]{filip2021uniformization}.

\begin{definition}\label{defn:AssumptionA}
 We say that the $\alpha_1$-cyclic parabolic $\SO_0(2,3)$-Higgs bundle \prettyref{eq:cyc} satisfies \textbf{assumption A} if for every $x_j\in D$,
\begin{itemize}
    \item[(1)] the parabolic weights are required to be $(\zeta^j,\zeta^j,0,-\zeta^j,-\zeta^j)$ for some $\zeta^j\in[0,1/2)$; 
    
    \item[(2)] when $\zeta^j\neq0$, the weighted reverse isotropic flag is either
\[0\subsetneq(\LL_1\oplus\LL_2)_{x_j}\subsetneq(\LL_0\oplus\LL_1\oplus\LL_2)_{x_j}\subsetneq(\EE)_{x_j}\]
or
\[0\subsetneq(\LL_{-2}\oplus\LL_{-1})_{x_j}\subsetneq(\LL_{-2}\oplus\LL_{-1}\oplus\LL_0)_{x_j}\subsetneq(\EE)_{x_j},\]
and when $\zeta^j=0$, the weighted reverse isotropic flag is the trivial flag $0\subsetneq(\EE)_{x_j};$

\item[(3)] $\tau\colon\LL_1\to\LL_2\otimes\KK_{\oX}(D)$ is an isomorphism.
\end{itemize}
In other words, $(\LL_1)_{x_j}$ and $(\LL_2)_{x_j}$ share the same parabolic weight and $\tau$ is a global isomorphism.

We call such a Higgs bundle is of \textbf{non-zero weights} if $\zeta^j\neq0$ for every $x_j\in D$.
\end{definition}

\begin{remark}
    When the surface $X$ is compact, i.e. $D=\emptyset$, the assumption A is satisfied automatically.
\end{remark}

\begin{remark}
    When $\gamma\equiv0$, an $\alpha_1$-cyclic parabolic $\SO_0(2,3)$-Higgs bundle satisfying assumption A coincides with a real variation of Hodge structure (RVHS) whose Hodge numbers are $(1,1,1,1,1)$ satisfying assumption A introduced by Filip in \cite{filip2021uniformization}.
\end{remark}

By the Milnor--Wood inequality for parabolic Higgs bundles, c.f. \cite[Section 8.2]{biquard2020parabolic}, a necessary condition to get the semistability of an $\alpha_1$-cyclic parabolic $\SO_0(2,3)$-Higgs bundle $(\EE,\Phi)$ represented by \prettyref{eq:cyc} is that $|\operatorname{pardeg}(\LL_1)|\leqslant\deg(\KK_\oX(D))$. Moreover, we give the following criterion of (semi-)stability.

\begin{proposition}\label{prop:stable}
 Suppose that the $\alpha_1$-cyclic parabolic $\SO_0(2,3)$-Higgs bundle $(\EE,\Phi)$ satisfies assumption A.
\begin{itemize}
        \item[(1)] $(\EE,\Phi)$ is semistable if and only if $\gamma\neq0$ or $\operatorname{pardeg}(\LL_1)\leqslant\deg(\KK_{\oX}(D))/2$ when $\gamma\equiv0$. 
        \item[(2)] $(\EE,\Phi)$ is stable if and only if $\gamma\neq0$ or $\operatorname{pardeg}(\LL_1)<\deg(\KK_{\oX}(D))/2$ when $\gamma\equiv0$.
    \end{itemize}
\end{proposition}

\begin{proof}
    We first consider that $\gamma\equiv0$. Then the only proper isotropic $\Phi$-invariant subbundles are $\LL_2$ and $\LL_1\oplus\LL_2$. Hence the semistability condition \prettyref{defn:stable} is equivalent to $\operatorname{pardeg}(\LL_2)\leqslant0$ and $\operatorname{pardeg}(\LL_1)+\operatorname{pardeg}(\LL_2)\leqslant0$. Note that $\LL_1\cong\LL_2\otimes\KK_{\oX}(D)$ and $\LL_1,\LL_2$ share the same parabolic weight. We obtain that $\operatorname{pardeg}(\LL_1)=\operatorname{pardeg}(\LL_2)+\deg(\KK_\oX(D))$. Hence the Higgs bundle is stable if and only if $\operatorname{pardeg}(\LL_1)\leqslant\deg(\KK_\oX(D))/2$. Similarly, the stability holds if and only if $\operatorname{pardeg}(\LL_1)<\deg(\KK_\oX(D))/2$.

     When $\gamma\neq0$, there are no nonzero $\Phi$-isotropic subbundles. To obtain that, we fix a local coordinate $(U,z)$ on $X$ and choose local holomorphic frames $e_i$ on $U$ for each $\LL_i$ such that $e_{-i}=e_i^\vee$ and $\tau(e_1)=e_{2}\dd z$. Set $\beta(e_0)=be_1\dd z,\gamma(e_2)=ce_{-1}\dd z$. Then locally the characteristic polynomial of $\Phi/\dd z$ is $\det(\lambda-\Phi/\dd z)=\lambda(\lambda^4+2b^2c)$. Generically $b^2c\ne0$ and the eigenvectors of $\Phi/\dd z$ are
     \[\begin{cases}
         -c e_{-2}+a e_2&\mbox{for }\lambda=0\\
         b^2c e_{-2}+\omega^3e_{-1}+\omega^2be_0+\omega b^2e_1+b^2e_2&\mbox{for }\lambda=\omega,\mbox{ where }\omega^4=-2b^2c
     \end{cases}.\]
     Now one can readily check these eigenvectors could not be isotropic if $b,c\neq0$. Hence any possible $\Phi$-invariant subbundle is not isotropic if $\beta,\gamma\neq0$.
\end{proof}

\begin{remark}
    A direct degree computation shows that $\beta\neq0$ implies that $\operatorname{pardeg}(\LL_1)\geqslant
    -\deg(\KK_\oX(D))$ and $\gamma\neq0$ implies that $\operatorname{pardeg}(\LL_1)\leqslant\deg(\KK_\oX(D))$. Hence \prettyref{prop:stable} coincides with the bound given by Milnor--Wood inequality.
\end{remark}

The proof of the following lemma for Hitchin section can be found in \cite[Corollary 2.11]{collier2017asymptotics} and there is no difference for our case because the key point is the uniqueness of the harmonic metric and the compatibility between the harmonic metric and the holomorphic bilinear form. We omit its proof.

\begin{lemma}\label{lemma:metricsplit}
    Suppose that the $\alpha_1$-cyclic parabolic $\SO_0(2,3)$-Higgs bundle \prettyref{eq:cyc} is stable. Then the harmonic metric $h_{\EE}$ of a stable  splits as $\bigoplus_{i=-2}^2h_{i}$, where $h_i$ is an Hermitian metric on $\LL_i$. Furthermore, $h_{-i}=h_i^{\vee}$.
\end{lemma}

For the real structure given by the harmonic metric of a stable $\alpha_1$-cyclic parabolic $\SO_0(2,3)$-Higgs bundle, we have the following lemma.

\begin{lemma}\label{lemma:realstructure}
    Suppose that the $\alpha_1$-cyclic parabolic $\SO_0(2,3)$-Higgs bundle \prettyref{eq:cyc} is stable. Then there are real subbundles $(\LL_i)_{\mathbb{R}}\subset\LL_i^{\vee}\oplus\LL_i$ for $i=1,2$ and $(\LL_0)_{\mathbb{R}}\subset\LL_0$ such that the real subbundle given by the harmonic metric of a stable $\alpha_1$-cyclic parabolic $\SO_0(2,3)$-Higgs bundle is $\bigoplus_{i=0}^2(\LL_i)^\mathbb{R}$. Furthermore, $\LL_i$ and $\LL_{-i}$ are conjugate with respect to $(\LL_i)_{\mathbb{R}}$ for $i=1,2$.
\end{lemma}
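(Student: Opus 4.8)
The plan is to make the real structure produced by $\mathsf{NAH}$ completely explicit, feeding on the splitting from \prettyref{lemma:metricsplit}. Recall from the discussion of the $\SO_0(p,q)$ case in \prettyref{sec:pre} that the real subbundle attached to the harmonic metric is $\EE_\mathbb{R}=\UU_\mathbb{R}\oplus\VV_\mathbb{R}$, where $\UU_\mathbb{R}$ and $\VV_\mathbb{R}$ are the fixed loci of the anti-linear bundle involutions $\lambda_\UU$, $\lambda_\VV$ coming from the compatibility of $h$ with the holomorphic orthogonal structures. Concretely I would characterize $\lambda_\UU$ by $Q_\UU(u,v)=h_\UU(u,\lambda_\UU v)$ for all local sections $u,v$, and $\lambda_\VV$ analogously; the compatibility identities $h_\UU=(q_\UU)^*(h_\UU^\vee)$ and $h_\VV=(q_\VV)^*(h_\VV^\vee)$ guarantee that these are genuine anti-linear involutions, so that their fixed loci are real forms on which $h$ and $Q$ agree.

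First I would pin down the $\LL_i$-block structure of $\lambda_\UU$ and $\lambda_\VV$. Since \prettyref{lemma:metricsplit} makes $h_\EE=\bigoplus_i h_i$ block-diagonal for $\EE=\bigoplus_i\LL_i$, while $Q_\UU$ and $Q_\VV$ are anti-diagonal in the $\LL_i$-grading (they pair $\LL_i$ with $\LL_{-i}$ and $\LL_0$ with itself), a one-line fibrewise computation shows $\lambda$ must be grading-reversing: for $v$ a local section of $\LL_i$, the value $Q(u,v)$ depends only on the $\LL_{-i}$-component of $u$, so $h(u,\lambda v)$ must too, forcing $\lambda v\in\LL_{-i}$. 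Hence $\lambda$ interchanges $\LL_1\leftrightarrow\LL_{-1}$ and $\LL_2\leftrightarrow\LL_{-2}$ conjugate-linearly and preserves $\LL_0$. Setting $(\LL_1)_\mathbb{R}:=\mathrm{Fix}(\lambda_\UU)\subset\LL_{-1}\oplus\LL_1$, $(\LL_2)_\mathbb{R}:=\mathrm{Fix}(\lambda_\VV|_{\LL_{-2}\oplus\LL_2})\subset\LL_{-2}\oplus\LL_2$ and $(\LL_0)_\mathbb{R}:=\mathrm{Fix}(\lambda_\VV|_{\LL_0})\subset\LL_0$ then yields $\EE_\mathbb{R}=\bigoplus_{i=0}^2(\LL_i)_\mathbb{R}$, which is the first assertion. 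For the $\LL_0$-summand I would unwind $h_\VV=(q_\VV)^*(h_\VV^\vee)$ on the trivial line bundle to see it forces $h_0$ and $Q_\VV|_{\LL_0}$ to agree up to sign (the relevant scalar squares to $1$), so that $\lambda_\VV|_{\LL_0}$ is an involution with a nonzero real form.

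The second assertion then comes for free from the grading-reversing property. Restricting the two projections $\LL_{-i}\oplus\LL_i\to\LL_{\pm i}$ to the real form $(\LL_i)_\mathbb{R}$ gives $\mathbb{R}$-linear isomorphisms $(\LL_i)_\mathbb{R}\xrightarrow{\sim}\LL_i$ and $(\LL_i)_\mathbb{R}\xrightarrow{\sim}\LL_{-i}$ (a local section of $(\LL_i)_\mathbb{R}$ is determined by either component, since $\lambda$ recovers the other), and the induced identification $\LL_{-i}\cong\LL_i$ is precisely $\lambda|_{\LL_i}$, which is conjugate-linear. This is exactly the statement that $(\LL_i)_\mathbb{R}\otimes_\mathbb{R}\mathbb{C}=\LL_{-i}\oplus\LL_i$ with $\LL_{-i}=\overline{\LL_i}$, i.e. that $\LL_i$ and $\LL_{-i}$ are conjugate with respect to $(\LL_i)_\mathbb{R}$.

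I do not anticipate a serious obstacle: the content is the fibrewise linear algebra of the anti-linear involution attached to a $Q$-compatible Hermitian metric, globalized by the block-diagonality supplied by \prettyref{lemma:metricsplit}. The only point needing genuine care is the $\LL_0$-component, where I must verify that the harmonic metric on the trivial summand is normalized so that $\lambda_\VV|_{\LL_0}$ squares to the identity; this is where I would invoke the compatibility identity rather than treat it as formal.
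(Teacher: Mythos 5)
Your proposal is correct and follows essentially the same route as the paper: the paper's (much terser) proof likewise defines the conjugation by $Q(\bullet,\bullet)=h(\overline{\bullet},\bullet)$ and combines the anti-diagonal structure of $Q$ in the $\LL_i$-grading with the block-diagonality of $h$ from \prettyref{lemma:metricsplit} to conclude that the conjugation sends $\LL_i$ to $\LL_{-i}$, which gives both assertions at once. Your additional care about the involution property on $\LL_0$ and the explicit identification of $(\LL_i)_\mathbb{R}$ via the projections is a fuller write-up of details the paper leaves implicit, not a different argument.
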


\begin{proof}
    Below we use $\bar\bullet$ to denote the conjugation of $\bullet$ induced by the real subbundle given by the harmonic metric. Note that with respect to the bilinear form $Q=Q_\UU\oplus Q_{\VV}$, 
    \[Q(\LL_i,\LL_{-j})=0\mbox{ for any }i\neq j.\]
    Therefore, the conjugate of $\LL_i$ must be $\LL_{-i}$ by $Q(\bullet,\bullet)=h(\overline{\bullet},\bullet)$ and \prettyref{lemma:metricsplit}.
\end{proof}

\subsection{Higgs Field Estimates}\label{sec:proofboundedness}

In this section, we will prove the following proposition, which is the key point in the proof of our main results. 

\begin{proposition}\label{prop:boundedness}
    Suppose that the $\alpha_1$-cyclic parabolic $\SO_0(2,3)$-Higgs bundle
    \[(\EE,\Phi)=\begin{tikzcd}
	{\mathcal{L}_{-2}} & {\mathcal{L}_{-1}} & {\mathcal{L}_{0}} & {\mathcal{L}_{1}} & {\mathcal{L}_{2}}
	\arrow["\tau^\vee"', from=1-1, to=1-2]
	\arrow["\beta^\vee"', from=1-2, to=1-3]
	\arrow["\beta"', from=1-3, to=1-4]
	\arrow["\tau"', from=1-4, to=1-5]
	\arrow["\gamma"', curve={height=18pt}, from=1-5, to=1-2]
	\arrow["\gamma^\vee"', curve={height=18pt}, from=1-4, to=1-1]
\end{tikzcd}\] is stable, satisfying assumption A, of non-zero weights and $\operatorname{pardeg}(\LL_1)<\deg(\KK_\oX(D))$. Then there exists a constant $C>0$, such that $\|\tau\|-\|\gamma\|>C$, where $\tau$, $\gamma$ are viewed as sections of $\LL_1^\vee\otimes\LL_2\otimes\KK_{X}$
    and $\LL_2^\vee\otimes\LL_{-1}\otimes\KK_{X}$ respectively, whose metrics are induced by the harmonic metric and the unique complete conformal hyperbolic metric $g_{\mathrm{hyp}}$.
\end{proposition}

Its proof relies on the following Cheng--Yau maximum principle, c.f. \cite[Theorem 8]{cheng1975differential}. Let $\Delta_g$ denote the metric Laplacian with respect to a metric $g$.

\begin{fact}\label{fact:CYmaximum}
    Suppose $(M,h)$ is a complete manifold with Ricci curvature bounded from below. Let $u$ be a $C^2$-function defined on $M$ such that $\Delta_hu\geqslant f(u)$, where $f\colon\mathbb{R}\to\mathbb{R}$ is a function. Suppose there is a continuous positive function $g\colon [a,\infty)\to\mathbb{R}_+$ such that
\begin{itemize}
    \item[(1)] $g$ is non-decreasing;

    \item[(2)] $\liminf_{t\to+\infty}\frac{f(t)}{g(t)}>0$;

    \item[(3)] $\int_a^\infty(\int_b^t g(\tau)\dd \tau)^{-1/2}\dd t<\infty$, for some $b\geqslant a$,
\end{itemize}
 then the function $u$ is bounded from above. Moreover, if $f$ is lower semi-continuous, $f(\sup u)\leqslant 0$.
\end{fact}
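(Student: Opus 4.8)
The plan is to follow the barrier-and-comparison strategy of Keller--Osserman theory that underlies Cheng--Yau's argument, the only two external inputs being the Laplacian comparison theorem (which converts the lower Ricci bound into an upper bound for $\Delta r$, where $r=d(\cdot,x_0)$) and, for the sharp final clause, the Omori--Yau maximum principle. Write $n=\dim M$ and suppose $\operatorname{Ric}\geqslant-(n-1)\kappa$ with $\kappa\geqslant0$. The heart of the matter is to manufacture, \emph{uniformly in the center}, a radial strict supersolution of $\Delta w\leqslant f(w)$ on a geodesic ball of some fixed finite radius $R^\ast$ that blows up to $+\infty$ on the boundary sphere; a comparison argument then pins $u$ at the center below the (fixed) center value of the barrier, and letting the center roam over $M$ yields $\sup_M u<\infty$.

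First I would use hypothesis (2) to fix $c_0>0$ and $T\geqslant a$ with $f(t)\geqslant c_0\,g(t)$ for all $t\geqslant T$, so that it suffices to dominate $u$ by a supersolution of $\Delta w\leqslant c_0 g(w)$ as long as $w\geqslant T$. The Laplacian comparison theorem gives, in the barrier sense, $\Delta r\leqslant (n-1)\bigl(\sqrt{\kappa}+r^{-1}\bigr)=:\Lambda(r)$. I then seek an increasing radial profile $\psi$ on $[0,R^\ast)$ with $\psi(0)=T$, $\psi'(0)=0$, and $\psi\to+\infty$ as $r\to R^{\ast-}$, solving the strict differential inequality $\psi''+\Lambda(r)\psi'\leqslant c_0 g(\psi)-\delta$ for a small $\delta\in(0,c_0 g(T))$; then $w=\psi(d(\cdot,x))$ obeys $\Delta w\leqslant\psi''+\Lambda\psi'\leqslant c_0 g(w)-\delta\leqslant f(w)-\delta$ throughout the ball, using $w\geqslant\psi(0)=T$. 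Multiplying the model equation $\psi''=c_0 g(\psi)$ by $\psi'$ and integrating yields the first-order relation $\tfrac12(\psi')^2=c_0\bigl(G(\psi)-G(T)\bigr)$ with $G(t)=\int_b^t g$, whence
\[
\int_{T}^{\infty}\frac{\dd s}{\sqrt{2c_0\,(G(s)-G(T))}}=R^\ast<\infty
\]
precisely by the convergence hypothesis (3); monotonicity of $g$ from (1) keeps $G$ convex and forces $g(\psi)\to\infty$, so the singular first-order term $\Lambda(r)\psi'$ (which is controlled near $r=0$ because $\psi'(0)=0$) is absorbed at the cost of rescaling constants while $R^\ast$ stays finite. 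Crucially, $R^\ast$ and $\psi(0)=T$ depend only on $n,\kappa,c_0,g,\delta$, never on the center.

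Next I would run the comparison on $B_{R^\ast}(x)$ for an arbitrary $x\in M$. Completeness and Hopf--Rinow make $\overline{B_{R^\ast}(x)}$ compact, so the $C^2$ function $u$ is bounded there, while $w\to+\infty$ on $\partial B_{R^\ast}(x)$; hence $u-w$ attains an interior maximum at some $p$. At $p$ one has $\Delta u(p)\leqslant\Delta w(p)\leqslant f(w(p))-\delta<f(w(p))$, and $\Delta u(p)\geqslant f(u(p))$, so $f(u(p))<f(w(p))$; since $f$ is non-decreasing this forces $u(p)<w(p)$, i.e.\ $\max(u-w)<0$. In particular $u(x)<w(x)=\psi(0)=T$, and as $x$ was arbitrary, $\sup_M u\leqslant T<\infty$. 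The one subtlety is that $p$ may lie in the cut locus of $x$, where $r$ is merely Lipschitz; this is dispatched by Calabi's trick, replacing $r$ by the smooth upper barrier $d(\cdot,x')+d(x',x)$ for a point $x'$ slightly along the minimizing geodesic, for which the comparison inequality for $\Delta r$ persists.

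Finally, for the sharpened conclusion when $f$ is lower semicontinuous, I would invoke the Omori--Yau maximum principle, now legitimately applicable since $u$ has just been shown bounded above and $\operatorname{Ric}$ is bounded below: there is a sequence $p_k$ with $u(p_k)\to\sup_M u$ and $\Delta u(p_k)<1/k$. Then $f(u(p_k))\leqslant\Delta u(p_k)<1/k$, and lower semicontinuity gives $f(\sup_M u)\leqslant\liminf_k f(u(p_k))\leqslant0$. I expect the main obstacle to be the ODE step of the second paragraph: building the radial barrier so that the singular first-order geometric term $\Lambda(r)\psi'$ is genuinely absorbed while the finite blow-up radius coming from (3) is preserved, together with the cut-locus bookkeeping needed to make the pointwise comparison rigorous. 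Everything else is soft.
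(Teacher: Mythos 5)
The paper offers no proof of this Fact: it is quoted verbatim from Cheng--Yau \cite[Theorem 8]{cheng1975differential}, so there is nothing internal to compare against; your Keller--Osserman barrier-plus-comparison route is a legitimate, well-known alternative to Cheng--Yau's original argument (which instead reparametrizes $u$ by the bounded primitive $\Phi(u)$, $\Phi'(t)=\bigl(2\int_b^t g\bigr)^{-1/2}$, and applies their generalized maximum principle directly). However, your comparison step contains a genuine error as written: at the interior maximum $p$ of $u-w$ you deduce $f(u(p))<f(w(p))$ and then conclude $u(p)<w(p)$ ``since $f$ is non-decreasing'' --- but $f$ is \emph{not} assumed monotone anywhere in the statement; only $g$ is non-decreasing, via hypothesis (1). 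With a non-monotone $f$ (which hypothesis (2) fully permits) this implication simply fails. The repair is short but must route the comparison through $g$ and the threshold $T$: if $u(p)\geqslant w(p)$, then $u(p)\geqslant w(p)\geqslant T$, so $\Delta u(p)\geqslant f(u(p))\geqslant c_0\,g(u(p))\geqslant c_0\,g(w(p))$ by (1), while $\Delta u(p)\leqslant \Delta w(p)\leqslant c_0\,g(w(p))-\delta$, a contradiction; hence $u(p)<w(p)$ and the argument proceeds as you intended. You should also note that the strict margin $\delta$ is exactly what makes this contradiction, rather than a mere non-strict inequality, available.

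The step you flag as the main obstacle --- absorbing the first-order term $\Lambda(r)\psi'$ while keeping the blow-up radius finite --- does close, and it is worth recording why: hypotheses (1) and (3) together force $g(t)\gtrsim t$ for large $t$ (if $g(t_k)\leqslant t_k/k$ along $t_k\to\infty$, then $G(t)\leqslant t_k^2/k$ for $t\leqslant t_k$, whence $\int_{t_k/2}^{t_k}G^{-1/2}\,\dd t\geqslant \sqrt{k}/2$, contradicting convergence in (3)). Since $G(t)\leqslant (t-b)\,g(t)$ by monotonicity, this gives $\sqrt{G(t)}\lesssim g(t)$ at infinity, so on $r\geqslant r_0$ (where $\Lambda\leqslant\Lambda_0$) the term $\Lambda_0\psi'=\Lambda_0\sqrt{2c_0\bigl(G(\psi)-G(T)\bigr)}$ is dominated by a fixed fraction of $c_0\,g(\psi)$ once $\psi$ is large, and near $r=0$ the quadratic cap $\psi(r)=T+\epsilon r^2$ handles the $1/r$ singularity; gluing yields the barrier with $R^\ast$ finite, uniformly in the center. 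Your cut-locus treatment via Calabi's trick and your final clause are sound: once $u$ is bounded above and $\operatorname{Ric}$ is bounded below, Yau's maximum principle produces $p_k$ with $u(p_k)\to\sup u$ and $\limsup_k\Delta u(p_k)\leqslant0$, and lower semicontinuity gives $f(\sup u)\leqslant\liminf_k f(u(p_k))\leqslant 0$. So the proposal is correct in architecture and in its hard analytic content, but the monotonicity slip is a real gap in the comparison lemma and must be fixed as above.
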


In particular, for $\delta> 1$ and a positive constant $c_0$, one can check if $f(t)\geqslant c_0t^\delta$ for $t$ large enough, $g(t) = t^{(\delta+1)/2}$ satisfies the above three conditions.

In this subsection, we always assume that our $\alpha_1$-cyclic parabolic $\SO_0(2,3)$-Higgs bundle satisfies the conditions in \prettyref{prop:boundedness}, i.e. stable, satisfying assumption A, of non-zero weights and $\operatorname{pardeg}(\LL_1)<\deg(\KK_{\oX}(D))$. To make use of \prettyref{fact:CYmaximum}, we need to choose a suitable background metric. 

Due to the stability, the harmonic metric splits as $\oplus_{i=-2}^2h_i$ by \prettyref{lemma:metricsplit}. Since $\tau|_X\colon\LL_1|_X\to\LL_2|_X\otimes\KK_X$ is an isomorphism, we can obtain an Hermitian metric $h_1^\vee\otimes h_2|_X$ on $\KK_X^\vee\cong\mathcal{T}_X$, i.e. an Hermitian metric on the holomorphic tangent bundle and this induces a Riemannian metric $g_h$ on $X$. To use the maximum principle with respect to the background metric $g_h$, we would like to prove its completeness and that its curvature must be bounded from below. 

\begin{remark}
    \prettyref{prop:boundedness} is already known for compact surface $X$ when $\deg(\LL_1)\in[-\deg(\KK_X),0]$, c.f. \cite[Section 4.3]{collier2019geometry}. But we need extra analysis on the behavior of the background metric and the Higgs fields to use the maximum principle when the surface is non-compact.
\end{remark}

Below we use the notation $A\lesssim B$ or $B\gtrsim A$ to denote that there exists some positive constant $c>0$ such that $A\leqslant c\cdot B$.

\begin{lemma}\label{lemma:completeness}
    The metric $g_h$ is quasi-isometric to the unique complete conformal hyperbolic metric $g_{\mathrm{hyp}}$. In particular, it is complete. Moreover, $\|\tau\|$ is bounded by positive constants from above and below.
\end{lemma}

\begin{proof}
    Suppose that $(U_j,z_j)$ are coordinate charts centered at punctures $x_j\in D$ respectively and $\oX\setminus\bigcup_{j=1}^{\deg(D)} U_j$ is compact. $\tau$ can be represented as $v_j^\vee\otimes w_j\otimes\dfrac{\dd z_j}{z_j}$ for some non-vanishing local section $v_j\in\mathrm{H}^0(U_j,\LL_1), w_j\in\mathrm{H}^0(U_j,\LL_2)$, where $v_j^\vee$ (resp. $w_j^\vee$) is the dual of $v_j$ (resp. $w_j$), since $\tau$ is an isomorphism between $\LL_1$ and $\LL_2\otimes\KK_{\oX}(D)$. By \prettyref{fact:harmonicmetricG}, it suffices to prove that 
    \[\dfrac{\|w_j\|_{h_{\mathrm{mod}}}}{\|v_j\|_{h_{\mathrm{mod}}}}\cdot\left\|\dfrac{\dd z_j}{z_j}\right\|_{g_{\mathrm{hyp}}^{\vee}}\in\left[\dfrac{1}{c},c\right]\mbox{ for some positive }c.\]

    Below we recall the definition of the model metric \prettyref{defn:modelmetric}. Since the Higgs bundle is $\alpha_1$-cyclic, $\LL_1$ and $\LL_2$ share the same weight $\delta^j\neq0$ (which equals to $\zeta^j$ or $-\zeta^j$) at $x_j$. Therefore, the weight graded pieces are $\operatorname{Gr}_{-\delta^j}(\EE_{x_j})=(\LL_{-2}\oplus\LL_{-1})_{x_j}$, $\operatorname{Gr}_{0}(\EE_{x_j})=(\LL_0)_{x_j}$ and $\operatorname{Gr}_{\delta^j}(\EE_{x_j})=(\LL_{1}\oplus\LL_{2})_{x_j}$. With respect to the basis $w_j^\vee,v_j^\vee,1,v_j,w_j$, the residue $\operatorname{Res}_{x_j}\Phi$ can be represented by
    \[\begin{pmatrix}
            0&0&0&\operatorname{Res}_{x_j}\gamma^{\vee}&0\\
            1&0&0&0&\operatorname{Res}_{x_j}\gamma\\
            0&\operatorname{Res}_{x_j}\beta^\vee&0&0&0\\
            0&0&\operatorname{Res}_{x_j}\beta&0&0\\
            0&0&0&1&0
        \end{pmatrix},\]
        and the graded residue $\operatorname{GrRes}_{x_j}\Phi$ is given by
        \[\begin{pmatrix}
            0&0&0&0&0\\
            1&0&0&0&0\\
            0&0&0&0&0\\
            0&0&0&0&0\\
            0&0&0&1&0
        \end{pmatrix}.\]
        Note that it has no semisimple part and the only eigenvalue is $0$. Therefore, the eigenvalue decomposition $\operatorname{Gr}_\delta(\EE_{x_j})=\operatorname{Gr}_\delta^0(\EE_{x_j})$ is trivial for any $\delta\in\mathbb{R}$. The weight filtration is given by
        \[W_{-1}\operatorname{Gr}_{\delta^j}(\EE_{x_j})=W_{0}\operatorname{Gr}_{\delta^j}(\EE_{x_j})=(\LL_2)_{x_j}, W_{1}\operatorname{Gr}_{\delta^j}(\EE_{x_j})=(\LL_{1}\oplus\LL_{2})_{x_j},\]
        \[W_{-1}\operatorname{Gr}_{-\delta^j}(\EE_{x_j})=W_{0}\operatorname{Gr}_{-\delta^j}(\EE_{x_j})=(\LL_{-1})_{x_j}, W_{1}\operatorname{Gr}_{-\delta^j}(\EE_{x_j})=(\LL_{-2}\oplus\LL_{-1})_{x_j},\]
        \[W_0\operatorname{Gr}_{0}(\EE_{x_j})=(\LL_{0})_{x_j}.\]
        Hence the model metric is
        \[\operatorname{diag}\left(|z_j|^{2\delta^j}\cdot|\ln|z_j||,|z_j|^{2\delta^j}\cdot|\ln|z_j||^{-1},1,|z_j|^{-2\delta^j}\cdot|\ln|z_j||,|z_j|^{-2\delta^j}\cdot|\ln|z_j||^{-1}\right).\]
    This implies that \[\dfrac{\|w_j\|_{h_{\mathrm{mod}}}}{\|v_j\|_{h_{\mathrm{mod}}}}=\sqrt{\dfrac{|z_j|^{-2\delta^j}\cdot|\ln|z_j||^{-1}}{|z_j|^{-2\delta^j}\cdot|\ln|z_j||}}=\dfrac{1}{\ln|z_j|}\]
    over $(U_j,z_j)$. 
    
    Due to $g_{\mathrm{hyp}}$ is the complete hyperbolic metric compatible with the complex structure, $x_j$ is a cusp of $g_{\mathrm{hyp}}$, hence we get that $\|\mathrm{d}z_j/z_j\|_{g_{\mathrm{hyp}}^{\vee}}=1/\|z_j\cdot\partial/\partial z_j\|_{g_{\mathrm{hyp}}}=|\ln|z_j||$ by \prettyref{example:cusphyperbolic}. Therefore, \[\dfrac{\|w_j\|_{h_{\mathrm{mod}}}}{\|v_j\|_{h_{\mathrm{mod}}}}\cdot\left\|\dfrac{\dd z_j}{z_j}\right\|_{g_{\mathrm{hyp}}^{\vee}}=\dfrac{1}{|\ln|z_j||}\cdot|\ln|z_j||=1.\]
\end{proof}

Below we do some local analysis. Fix a local coordinate $(U,z=x+\iu y)$ on $X$. By choosing holomorphic frames $e_i$ for each $\LL_i$ such that $e_{-i}=e_i^\vee$ and $\tau(e_1)=e_2\dd z$, the harmonic metric $h_\EE$ can be written as $\sum_{i=-2}^2H_i\overline{e_i^\vee}\otimes e_i^\vee$ for positive smooth functions $H_i$ by \prettyref{lemma:metricsplit}. Furthermore, we have $H_{-i}=H_i^{-1}$ and $H_0=1$. Let $\Delta$ denote the coordinate Laplacian, i.e. $\partial^2/\partial z\bar\partial z=(\partial^2/\partial x^2+\partial^2/\partial y^2)/4$. Note that 
\begin{equation}\label{eq:conformal}
    g_h(\partial/\partial x,\partial/\partial x)=g_h(\partial/\partial y,\partial/\partial y)=2(h_1^{\vee}\otimes h_2)(\partial/\partial z,\partial/\partial z)=2H_{-2}^{-1}H_{-1}.
\end{equation}
We use $|\bullet|^2$ to denote the coordinate norm for a field with respect to the local coordinate and frames $e_i$, $\dd z$. With our choice of frame, we have $|\tau|^2=1$. 

Now the Hitchin's self-dual equation \prettyref{eq:Hitchin} implies that:
\begin{equation}\label{eq:Hit}
    \begin{cases}
        \Delta\ln H_{-2}=H_{-2}H_{1}^{-1}|\gamma^\vee|^2-H_{-2}^{-1}H_{-1},\\
        \Delta\ln H_{-1}=H_{-1}H_{-2}^{-1}+H_{-1}H_{2}^{-1}|\gamma|^2-H_{-1}^{-1}|\beta|^2
    \end{cases}
\end{equation}
by taking projection onto $\LL_{-2}$ and $\LL_{-1}$.

\begin{lemma}\label{lemma:curvaturebound}
    
    The curvature $K_{g_h}$ of $g_h$ is nowhere smaller than $-2$.
\end{lemma}

\begin{proof}
By \prettyref{eq:conformal}, it follows from that
    \[\begin{aligned}
        K_{g_h}=&-2\cdot\dfrac{\Delta\ln (2H_{-2}^{-1}H_{-1})}{2H_{-2}^{-1}H_{-1}}\quad(\mbox{by curvature fomula of conformal metric})\\
        =&-\dfrac{-\Delta\ln H_{-2}+\Delta\ln H_{-1}}{H_{-2}^{-1}H_{-1}}\\
        =&-\dfrac{2H_{-1}H_{-2}^{-1}-H_{-1}^{-1}|\beta|^2}{H_{-2}^{-1}H_{-1}}\quad(\mbox{by \prettyref{eq:Hit} and }|\gamma|^2=|\gamma^\vee|^2)\\
        \geqslant&-2.
    \end{aligned}\]
\end{proof}

\begin{proof}[Proof of \prettyref{prop:boundedness}]
    We first show that $\|\gamma\|\to 0$ when tending to a puncture $x\in D$. Let $\delta$ be the parabolic weight of $\LL_2$, then $\delta\neq 0$. If $\delta>0$, with respect to a compatible basis at $x$, we have $\gamma=O(1)\dd z/z$. Then by \prettyref{fact:harmonicmetricG} we obtain that $\|\gamma\|\lesssim|z|^{2\delta}\cdot|\ln|z||$ and when $z\to 0$, $\|\gamma\|\to 0$. Otherwise $\delta<0$, with respect to a compatible basis at $x$, we have $\gamma=O(z)\dd z/z$. Then by \prettyref{fact:harmonicmetricG} we obtain that $\|\gamma\|\lesssim|z|^{1+2\delta}\cdot|\ln|z||$ and when $z\to 0$, $\|\gamma\|\to 0$.

    Set $u=\|\gamma\|^2/\|\tau\|^2$. Locally we have 
    \[u=\dfrac{H_{-2}H_1^{-1}|\gamma|^2}{H_{-1}H_{-2}^{-1}}=H_{-2}^2|\gamma|^2.\]
    Since $\gamma$ is a holomorphic section, we obtain that
    \[\begin{aligned}
        \Delta\ln u=&2\Delta\ln H_{-2}\\
        =&2(H_{-2}H_1^{-1}|\gamma|^2-H_{-2}^{-1}H_{-1})\\
        =&2(H_{-2}^{-1}H_{-1})(u-1).
    \end{aligned}\]
    Hence globally we have that $\Delta_{g_h}\ln u=u-1$ and this implies that $\Delta_{g_h}u\geqslant u(u-1)$. Note that by \prettyref{lemma:completeness} and \prettyref{lemma:curvaturebound} the right-hand side has a quadratic growth rate. Hence the background metric $g_h$ satisfies the conditions of \prettyref{fact:CYmaximum} and we then obtain that $\sup u\leqslant1$. 
    
    Moreover, if the supremum is attained at some point $x\in X$, then $u\equiv 1$ by the strong maximum principle. When $D\neq\emptyset$, this contradicts with $\|\gamma\|\to0$ when tending to $D$. When $D=\emptyset$, i.e. $X=\oX$ is compact, This implies that $\gamma\colon\LL_2\to\LL_{-1}\otimes\KK_X$ is also an isomorphism. Therefore, \[\deg(\LL_1)=\deg(\LL_2)+\deg(\KK_X)=\deg(\LL_{-1})+2\deg(\KK_X).\]
    Hence $\deg(\LL_1)=\deg(\KK_{X})$, contradiction. Therefore, $\|\tau\|>\|\gamma\|$ over $X$. And when tending to a puncture, $\|\tau\|-\|\gamma\|$ has a positive lower bound by $\|\gamma\|\to0$ and \prettyref{lemma:completeness}. Hence $\|\tau\|-\|\gamma\|$ has a positive lower bound over $X$.
\end{proof}

\begin{remark}\label{rem:max}
    When $X$ is compact and $\deg(\LL_1)=\deg(\KK_X)$, $\gamma\colon\LL_2\to\LL_{-1}\otimes\KK_X$ is also an isomorphism and $\|\tau\|\equiv\|\gamma\|$. Such Higgs bundles achieve the maximal Toledo invariant and there would be strong restrictions on the underlying bundle. More precisely, $\LL_1\cong\LL_2\otimes\KK_X\cong\LL_{-1}\otimes\KK_X^2$ shows that $\LL_1\cong\II\otimes\KK_X$ where $\II$ is a line bundle with $\II^2\cong\mathcal{O}_X$. Now the $\SO_0(2,3)$-Higgs bundle can be represented by
\[\begin{tikzcd}
	& \II \\
	{\II\otimes\KK_X} & \II & {\II\otimes\KK_X^\vee} \\
	& {\mathcal{O}_X}
	\arrow["{\mathbf{1}}", curve={height=-12pt}, from=1-2, to=2-3]
	\arrow["{\mathbf{1}}", curve={height=-12pt}, from=2-1, to=1-2]
	\arrow["{\mathbf{1}}", from=2-1, to=2-2]
	\arrow["{\mathbf{1}}", from=2-2, to=2-3]
	\arrow["{\beta^\vee}", curve={height=-12pt}, from=2-3, to=3-2]
	\arrow["\beta", curve={height=-12pt}, from=3-2, to=2-1]
\end{tikzcd}.\]
By changing the basis of the middle two $\II$, it is equivalent to
\[\begin{tikzcd}
	& \II \\
	{\II\otimes\KK_X} & \II & {\II\otimes\KK_X^\vee} \\
	& {\mathcal{O}_X}
	\arrow["\oplus"{description}, draw=none, from=1-2, to=2-2]
	\arrow["{\mathbf{1}}", from=2-1, to=2-2]
	\arrow["{\mathbf{1}}", from=2-2, to=2-3]
	\arrow["{\beta^\vee}", curve={height=-12pt}, from=2-3, to=3-2]
	\arrow["\beta", curve={height=-12pt}, from=3-2, to=2-1]
\end{tikzcd}.\]
Hence the corresponding representation $\rho$ factors through $\mathrm{O}(2,2)\times\mathrm{O}(1)$. Such Higgs bundle is polystable and not stable as $\mathrm{SL}(5,\mathbb{C})$-Higgs bundle. However, it is stable as an $\SO_0(2,3)$-Higgs bundle by \prettyref{prop:stable}. 

When $\II=\mathcal{O}_{X}$, the corresponding representation factors through $\SO_{0}(2,2)$ and the Higgs bundle is equivalent to
\[\mathcal{O}_X\oplus\left(\left(\begin{tikzcd}
	{\KK_X^{1/2}} & {\KK_X^{-1/2}}
	\arrow["{\mathbf{1}}", curve={height=-18pt}, from=1-1, to=1-2]
	\arrow["{q_2}", curve={height=-18pt}, from=1-2, to=1-1]
\end{tikzcd}\right)\otimes\left(\begin{tikzcd}
	{\KK_X^{1/2}} & {\KK_X^{-1/2}}
	\arrow["{\mathbf{1}}", curve={height=-18pt}, from=1-1, to=1-2]
	\arrow["{q_2}", curve={height=-18pt}, from=1-2, to=1-1]
\end{tikzcd}\right)\right),\]
where $q_2\in\mathrm{H}^0(X,\KK_X^2)$ is a quadratic differential. Therefore, the corresponding representation comes from $\rho'\otimes\rho'$ where $\rho'\colon\pi_1(X)\to\mathrm{SL}(2,\mathbb{R})$ is a Fuchsian representation. So $\rho\colon\pi_1(X)\to\SO_0(2,3)$ is $\{\alpha_1\}$-Anosov but not $\{\alpha_2\}$-Anosov. Moreover, for general $\II$, $\rho$ is also $\{\alpha_1\}$-Anosov by \cite{burger2010higher} since it is a maximal representation but not $\{\alpha_2\}$-Anosov by \cite{davalo2024maximal} since it is not a Hitchin representation.
\end{remark}

\begin{remark}\label{rem:zeroweight}
    When $\zeta^j=0$ at some punctures $x_j\in D$, the only weight graded piece is the total fiber at $x_j$ whose weight is $0$. If we require $\gamma=O(1)\dd z_j$ around $x_j$, then the graded residue of the Higgs field is of the form
    \[\begin{pmatrix}
        0&0&0&0&0\\
        1&0&0&0&0\\
        0&\operatorname{Res}_{x_j}\beta&0&0&0\\
        0&0&\operatorname{Res}_{x_j}\beta&0&0\\
        0&0&0&1&0
    \end{pmatrix},\]
    where $\operatorname{Res}_{x_j}\beta=O(1)$. It gives the same estimates on $\|\tau\|$, $\|\gamma\|$ and $g_h$.
\end{remark}

\section{Index Estimates}\label{sec:estimates}

In this section we give some estimates for a general class of functions. The estimates can be used to establish domination property for Higgs bundles by choosing different functions given by different Higgs bundles. The crucial estimates below can be found in \cite[Section 2.2]{filip2021uniformization} for a special function $f_w$. We rewrite the statements and their proofs for completeness.

We fix a complete Riemannian manifold $(M,g)$ with the distance function $d\colon M\times M\to\mathbb{R}$ and mainly consider the smooth function $f\colon M\to\mathbb{R}\in C^\infty(M;\mathbb{R})$ satisfying part of the following conditions:
\begin{itemize}
    \item[(S1)] $f$ is a non-negative Morse function, i.e. $f\geqslant0$ and all critical points of $f$ are non-degenerate;

    \item[(S2)] $\|\dd f\|_g\gtrsim f$;

    \item[(S3)] $\|\dd f\|_g\gtrsim f^{1/2}$,
\end{itemize}
where $\|-\|_g$ denotes the norm associated with $g$.

\subsection{Auxiliary Results}

In \cite[Theorem 2.2.8]{filip2021uniformization}, the following mountain pass lemma and the Ekeland variational principle for Riemannian manifolds are used in his proof. 

\begin{lemma}\label{lemma:mountainpass}
    Suppose $F\colon M\to\mathbb{R}\in C^2(M;\mathbb{R})$ is a twice continuously differentiable function satisfying that
    \begin{enumerate}
        \item[(1)] $F(x_0)=0$ for some $x_0\in M$.

        \item[(2)] There exists $\alpha>0$ and $r>0$ such that $F(x)\geqslant\alpha$ for any $x$ with $d(x,x_0)=r$.

        \item[(3)] There exists an $x_1\in M$ such that $d(x_0,x_1)>r$ and $F(x_1)<\alpha$.
    \end{enumerate}
    Then there exists $c\geqslant\alpha$ and a sequence $(y_n)_{n=1}^{\infty}\in M^{\mathbb{N}}$ such that $F(y_n)\to c$ and $\|\nabla F(y_n)\|_g\to 0$, where $\nabla F$ denotes the gradient of $F$ with respect to the Riemannian metric $g$.
\end{lemma}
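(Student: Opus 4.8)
The plan is to run the standard minimax construction and to extract a Palais--Smale sequence at the minimax level directly from Ekeland's variational principle, thereby avoiding any global negative-gradient flow, which may fail to be complete on the (possibly non-compact) manifold $M$. First I would introduce the path space
\[
\Gamma=\{\gamma\in C([0,1];M)\mid \gamma(0)=x_0,\ \gamma(1)=x_1\},
\]
equipped with the uniform metric $\rho(\gamma_1,\gamma_2)=\max_{t\in[0,1]}d(\gamma_1(t),\gamma_2(t))$, which makes $(\Gamma,\rho)$ a complete metric space since $(M,g)$ is complete and a uniform limit of paths preserves the fixed endpoints. Writing $\Psi(\gamma)=\max_{t}F(\gamma(t))$, the functional $\Psi\colon\Gamma\to\mathbb R$ is continuous, and I set the minimax value
\[
c=\inf_{\gamma\in\Gamma}\ \max_{t\in[0,1]}F(\gamma(t)).
\]

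The second step is the two-sided control of $c$. For finiteness, Hopf--Rinow furnishes a minimizing geodesic joining $x_0$ to $x_1$; its image is compact, so $\Psi$ is finite on it and $c<\infty$. For the lower bound $c\geqslant\alpha$ I use the linking property: for any $\gamma\in\Gamma$ the continuous function $t\mapsto d(\gamma(t),x_0)$ equals $0$ at $t=0$ and $d(x_1,x_0)>r$ at $t=1$, so by the intermediate value theorem it equals $r$ at some $t^\ast$; hypothesis (2) then forces $F(\gamma(t^\ast))\geqslant\alpha$, whence $\Psi(\gamma)\geqslant\alpha$ for every $\gamma$ and thus $c\geqslant\alpha$. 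In particular $\Psi$ is bounded below, so Ekeland's variational principle applies.

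The heart of the argument, and the step I expect to be hardest, is the extraction of the Palais--Smale sequence. Applying Ekeland's principle to $\Psi$ on $(\Gamma,\rho)$ with parameters $\varepsilon_n\downarrow 0$ produces paths $\gamma_n\in\Gamma$ with $\Psi(\gamma_n)\leqslant c+\varepsilon_n$ and
\[
\Psi(\gamma)\ \geqslant\ \Psi(\gamma_n)-\sqrt{\varepsilon_n}\,\rho(\gamma,\gamma_n)\qquad\text{for all }\gamma\in\Gamma.
\]
I then argue that the maximum set $K_n=\{t: F(\gamma_n(t))=\Psi(\gamma_n)\}$ must contain a point where $\|\nabla F\|_g$ is small. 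Since $F(\gamma_n(0))=0$ and $F(\gamma_n(1))<\alpha\leqslant c<\Psi(\gamma_n)$, the set $K_n$ is a compact subset of the open interval $(0,1)$, so I can pick a cutoff $\phi_n$ supported away from the endpoints and deform by
\[
\gamma_n^s(t)=\exp_{\gamma_n(t)}\!\Big(-s\,\phi_n(t)\,\frac{\nabla F(\gamma_n(t))}{\|\nabla F(\gamma_n(t))\|}\Big),
\]
which fixes the endpoints, so $\gamma_n^s\in\Gamma$. If $\|\nabla F\|_g\geqslant\delta>0$ held on a neighborhood of $K_n$ for infinitely many $n$, the first-variation estimate would give $\Psi(\gamma_n^s)\leqslant\Psi(\gamma_n)-c_0 s$ for small $s$ while $\rho(\gamma_n^s,\gamma_n)\leqslant s$, contradicting the Ekeland inequality once $\sqrt{\varepsilon_n}<c_0$.

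Hence for each large $n$ there is $t_n$ with $F(\gamma_n(t_n))\to c$ and $\|\nabla F(\gamma_n(t_n))\|_g\to 0$, and setting $y_n=\gamma_n(t_n)$ together with the bound $c\geqslant\alpha$ completes the proof. The main obstacle is making the deformation and first-variation bound uniform and compatible with the uniform metric $\rho$: one must control $\max_t F(\gamma_n^s(t))$ over the whole max-set $K_n$ simultaneously (not at a single $t$), checking that $F$ increases by at most $O(s)$ on the complement where $F\circ\gamma_n$ is already bounded away from $\Psi(\gamma_n)$, and that the cutoff together with the gradient normalization keeps the perturbation $C^1$-small. This quantitative deformation bookkeeping is exactly what Ekeland's principle is designed to carry out without assuming the Palais--Smale condition or completeness of the gradient flow.
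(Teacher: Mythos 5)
Your proposal is correct, but it follows a genuinely different route from the paper. The paper reproduces Bisgard's argument: it introduces the \emph{normalized} gradient field $-w(x)\nabla F(x)$ with $w(x)=\frac{\|\nabla F(x)\|_g}{1+\|\nabla F(x)\|_g^2}$, whose norm is bounded by $1$, so that on a complete manifold the flow $\varphi_t$ exists for all time; it then flows an arbitrary path $\gamma$ joining $x_0$ to $x_1$, notes that $\varphi_t(x_0)$ stays inside the sphere $\{d(\cdot,x_0)=r\}$ while $\varphi_t(x_1)$ stays outside, extracts an accumulation parameter $s^*$ with $F(\varphi_n(\gamma(s^*)))\geqslant\alpha$ for all $n$, defines $c$ as the decreasing limit of $F$ along the single flow line through $\gamma(s^*)$, and produces the Palais--Smale sequence from the finiteness of $\int_0^{\infty}w\|\nabla F\|_g^2\,\dd t$ along that flow line. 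You instead work on the path space $\Gamma$ with the uniform metric, characterize $c$ as the minimax value $\inf_\gamma\max_t F$, and extract the Palais--Smale sequence via Ekeland's variational principle plus a cutoff/exponential-map deformation---the classical proof of the mountain-pass theorem without the Palais--Smale condition. Each approach has its advantages: the paper's is finite-dimensional and elementary (ODE existence plus an accumulation-point argument, with the normalization trick handling completeness of the flow), and it sidesteps the deformation bookkeeping that you rightly identify as the delicate step; yours needs no globally defined flow at all, pins $c$ down as the canonical minimax level, and in fact only requires $F\in C^1$, whereas the flow argument uses the local Lipschitz continuity of $\nabla F$ that the $C^2$ hypothesis supplies. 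Two points to make fully rigorous in your version: (i) the contradiction hypothesis should be the negation of the conclusion, namely that there exist $\varepsilon,\delta>0$ with $\|\nabla F(y)\|_g\geqslant\delta$ whenever $|F(y)-c|\leqslant\varepsilon$, which then yields the gradient lower bound on a neighborhood of the max-set $K_n$ for all large $n$; and (ii) the cutoff $\phi_n$ should be taken equal to $1$ on $\{t: F(\gamma_n(t))\geqslant\Psi(\gamma_n)-\eta/2\}$ and supported in $\{t: F(\gamma_n(t))>\Psi(\gamma_n)-\eta\}$ for suitable $\eta<\varepsilon$; since the perturbation points in the negative-gradient direction, $F$ increases by at most $O(s^2)$ everywhere, so the maximum over the region where $\phi_n<1$ stays below $\Psi(\gamma_n)-\eta/2+O(s^2)$ while the maximum over $\{\phi_n=1\}$ drops by $s\delta-O(s^2)$, which closes the contradiction against the Ekeland inequality once $\sqrt{\varepsilon_n}<\delta$.
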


In \cite[Theorem 3.1]{bisgard2015mountain}, J. Bisgard proved \prettyref{lemma:mountainpass} for the standard Euclidean space, we point out that his proof is also effective for any complete Riemannian manifold.

\begin{proof}[Proof of \prettyref{lemma:mountainpass}]
    Set $w(x)=\dfrac{\|\nabla F(x)\|_g}{1+\|\nabla F(x)\|_g^2}$, we consider the normalized gradient flow $\varphi_t\colon M\to M$ which is generated by the vector field $-w(x)\nabla F(x)$. In other words, we have
    \[\begin{cases}
        \dfrac{\dd\varphi_t}{\dd t}\bigg|_{t=0}(x)=-w(x)\nabla F(x),\\
        \varphi_0(x)=x.
    \end{cases}\]
    Note that
    \[
        \|-w(x)\nabla F(x)\|_g=\dfrac{\|\nabla F(x)\|_g^2}{1+\|\nabla F(x)\|_g^2}\in[0,1).
    \]
    We know that the flow $\varphi_t$ exists all the time because $M$ is complete. Since
    \[
    \begin{aligned}
        &\dfrac{\dd}{\dd t}\bigg|_{t=0}F(\varphi_t(x))\\
        =&\dd F(-w(x)\nabla F(x))\\
        =&g(-w(x)\nabla F(x),\nabla F(x))\\
        =&-w(x)\|\nabla F(x)\|_g^2\leqslant 0,
    \end{aligned}
    \]
    $F$ is decreasing along $\varphi_t(x)$.
    We claim that $d(x_0,\varphi_t(x_0))<r$ for all $t>0$. Otherwise, there exists $t_0>0$ such that $d(x_0,\varphi_{t_0}(x_0))=r$, then
    \[\alpha\leqslant F(\varphi_{t_0}(x_0))\leqslant F(x_0)=0,\]
    contradiction. 
    
    Similarly we have $d(x_0,\varphi_t(x_1))>r$ for all $t>0$. Suppose now that there is a path $\gamma\colon[0,1]\to M$ connecting $x_0$ and $x_1$, i.e. $\gamma(0)=x_0$ and $\gamma(1)=x_1$. For any $t\in\mathbb{R}$, we set $\gamma_t:=\varphi_t\circ\gamma$. For any non-negative integer $n\in\mathbb{N}$, since 
    \[d(x_0,\gamma_n(0))=d(x_0,\varphi_n(x_0))<r<d(x_0,\varphi_n(x_1))=d(x_0,\gamma_n(1)),\]
    there exists $s_n\in(0,1)$ such that $d(x_0,\gamma_n(s_n))=r$. Hence
    \[\alpha\leqslant F(\gamma_n(s_n))\leqslant\max_{s\in[0,1]}F(\gamma_n(s))=:F(\gamma_n(s_n^\prime)),\]
    where $F(\gamma_n(\bullet))$ achieves its maximum at $s_n'\in[0,1]$. Suppose that $s^*\in[0,1]$ is an accumulation point of $\{s_n^\prime\}$. We claim that $F(\gamma_n(s^*))\geqslant\alpha$ for all $n\in\mathbb{N}$. Otherwise there exists $N\in\mathbb{N}$ such that $F(\gamma_{N}(s^*))<\alpha$. Therefore there exists a subsequence $(s_{n_j}^\prime)$ of $(s_n^\prime)$ converging to $s^*$ and $J\in\mathbb{N}$ such that $F(\gamma_{N}(s_{n_j}^\prime))<\alpha$ for any $j>J$. Thus we can take $j'>J$ large enough such that $n_{j'}>N$, and then
    \[\alpha\leqslant F(\gamma_{n_{j'}}(s_{n_{j'}}^\prime))\leqslant F(\gamma_{N}(s_{n_{j'}}^\prime))<\alpha,\]
    contradiction. 

    Now since the sequence $F(\gamma_n(s^*))$ is decreasing and bounded from below by $\alpha$, $c:=\lim_{n\to\infty}F(\gamma_n(s^*))$ exists and $c\geqslant\alpha$. We know the integral
    \[\int_{0}^{+\infty}w(\gamma_t(s^*))\|\nabla F(\gamma_t(s^*))\|_g^2\dd t=\int_{0}^{+\infty}-\dfrac{\dd}{\dd t}F(\gamma_t(s^*))\dd t=F(\gamma(s^*))-c\]
    is finite. Hence there is a sequence $(t_n)\in\mathbb{R}_{>0}^{\mathbb{N}}$ such that
    \[\lim_{n\to+\infty}w(\gamma_{t_n}(s^*))\|\nabla F(\gamma_{t_n}(s^*))\|_g^2=0.\]
    Let $y_n:=\gamma_{t_n}(s^*)$. Then $\lim_{n\to\infty}F(y_n)=c\geqslant\alpha$ and
    \[w(y_n)\|\nabla F(y_n)\|_g^2=\dfrac{\|\nabla F(y_n)\|_g^3}{1+\|\nabla F(y_n)\|_g}\to 0\quad\mbox{when }n\to\infty\]
    implies that $\lim_{n\to\infty}\|\nabla F(y_n)\|_g=0$.
\end{proof}

We will also use the following fact which is implied by the Ekeland variational principle which is proven in \cite[Proposition 2.2]{alias2016maximum}. 

\begin{fact}\label{fact:Ekeland}
 Suppose $u\in C^1(M;\mathbb{R})$ is a continuously differentiable function with $u^*=\sup_{M}u<+\infty$. Then, for every sequence $(y_n)_{n=1}^{\infty}\in M^\mathbb{N}$ such that $u(y_n)\to u^*$ as $n\to\infty$, there exists a sequence $(x_n)_{n=1}^{\infty}\in M^\mathbb{N}$ with the properties
    \begin{itemize}
        \item[(1)] $u(x_n)\to u^*$;

        \item[(2)] $\|\nabla u(x_n)\|_g\to 0$;

        \item[(3)] $d(x_n,y_n)\to0$.
    \end{itemize}
\end{fact}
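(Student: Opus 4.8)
The plan is to deduce this statement from Ekeland's variational principle (EVP) applied to the function $\Psi:=-u$. Since $(M,g)$ is complete, the Hopf--Rinow theorem guarantees that the metric space $(M,d)$ is complete; moreover $\Psi$ is continuous, hence lower semicontinuous, and bounded below because $\Psi\geqslant-u^*$. Thus $(M,d)$ and $\Psi$ satisfy the hypotheses of EVP, with $\inf_M\Psi=-u^*$, and the given maximizing sequence $(y_n)$ for $u$ is a minimizing sequence for $\Psi$ with gap $\varepsilon_n:=u^*-u(y_n)=\Psi(y_n)-\inf_M\Psi\geqslant0$ tending to $0$.

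For each $n$ I would produce $x_n$ as follows. If $\varepsilon_n=0$, then $y_n$ is a global maximizer of $u$, so $\nabla u(y_n)=0$ and I simply set $x_n:=y_n$. If $\varepsilon_n>0$, I apply EVP with base point $y_n$, tolerance $\varepsilon_n$ and scale $\lambda_n:=\sqrt{\varepsilon_n}$, obtaining a point $x_n$ with $\Psi(x_n)\leqslant\Psi(y_n)$, with $d(x_n,y_n)\leqslant\lambda_n$, and with the rigidity inequality $\Psi(w)\geqslant\Psi(x_n)-(\varepsilon_n/\lambda_n)\,d(w,x_n)$ for every $w\in M$. Since $\varepsilon_n/\lambda_n=\sqrt{\varepsilon_n}$, rewriting these in terms of $u$ gives $u(x_n)\geqslant u(y_n)$ and $u(w)\leqslant u(x_n)+\sqrt{\varepsilon_n}\,d(w,x_n)$ for all $w$.

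Reading off the three conclusions is then routine. Property (1) follows from $u(y_n)\leqslant u(x_n)\leqslant u^*$ together with $u(y_n)\to u^*$; property (3) follows from $d(x_n,y_n)\leqslant\sqrt{\varepsilon_n}\to0$. For (2), fix a unit tangent vector $v\in T_{x_n}M$ and let $\gamma$ be the unit-speed geodesic with $\gamma(0)=x_n$ and $\dot\gamma(0)=v$; since $d(\gamma(t),x_n)\leqslant|t|$, the rigidity inequality yields $u(\gamma(t))-u(x_n)\leqslant\sqrt{\varepsilon_n}\,|t|$. Dividing by $t$ and letting $t\to0^{\pm}$, the $C^1$-regularity of $u$ gives $|\dd u(x_n)(v)|\leqslant\sqrt{\varepsilon_n}$; taking the supremum over unit $v$ produces $\|\nabla u(x_n)\|_g\leqslant\sqrt{\varepsilon_n}\to0$, which is (2).

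The only genuine subtlety is the passage from the metric-space conclusion of EVP to the Riemannian differential bound in (2): the bridge is precisely the comparison $d(\gamma(t),x_n)\leqslant|t|$ along geodesics, which converts ``almost-minimality up to the Lipschitz error $\sqrt{\varepsilon_n}$'' into a pointwise bound on the directional derivative, and it is here that the $C^1$ hypothesis on $u$ is essential. I would also emphasize that on a non-compact complete $M$ the value $u^*$ need not be attained at any point, which is exactly why the statement yields an approximating (Palais--Smale-type) sequence rather than a genuine critical point, and that completeness, through Hopf--Rinow, is what licenses the use of EVP at the outset.
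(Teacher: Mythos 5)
Your proposal is correct, and it takes exactly the route the paper itself indicates: the paper does not prove this Fact internally but quotes it as ``implied by the Ekeland variational principle'' with a citation to \cite[Proposition 2.2]{alias2016maximum}, which is precisely your derivation (apply EVP to $-u$ with tolerance $\varepsilon_n$ and scale $\sqrt{\varepsilon_n}$, then convert the metric rigidity inequality into the gradient bound via unit-speed geodesics). Your handling of the two genuine subtleties --- completeness of $(M,d)$ via Hopf--Rinow, and the comparison $d(\gamma(t),x_n)\leqslant|t|$ that turns almost-minimality into the bound $\|\nabla u(x_n)\|_g\leqslant\sqrt{\varepsilon_n}$ --- is sound, so your write-up in effect supplies the proof the paper outsources to the reference.
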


\subsection{Exponential Growth}

Below we state the index estimates of function $f$ satisfying (S1)-(S3) given in \cite{filip2021uniformization}.

\begin{theorem}\label{thm:index}
Given a smooth function $f\in C^\infty(M;\mathbb{R})$ satisfying (S1) and (S2).
    \begin{itemize}
        \item[(1)] The only critical points of $f$ are local minima, which occur when $f(x) = 0$.

        \item[(2)] $\inf_{M}f= 0$, and furthermore if $f$ attains its infimum (indeed minimum $0$) at some $x_{\mathrm{min}}\in M$, then for any sequence $(x_n)_{n=1}^\infty$ with $\lim_{n\to\infty}f(x_n)=0$, $(x_n)_{n=1}^\infty$ converges to $x_{\mathrm{min}}$. In particular, $f$ has at most one critical point.

        \item[(3)] In addition, if $f$ satisfies (S3) as well, then $f$ has precisely one critical point.
    \end{itemize}
\end{theorem}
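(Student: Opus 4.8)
The plan is to read off all three statements from the gradient inequalities (S1)--(S3) together with the variational tools recalled above. Part (1) is immediate: condition (S2) provides a constant $c>0$ with $f\leqslant c\,\|\dd f\|_g$ everywhere, so at a critical point $x$ one has $\|\dd f(x)\|_g=0$, whence $f(x)\leqslant 0$; since $f\geqslant 0$ this forces $f(x)=0$. Conversely, any zero of the non-negative function $f$ is automatically a global minimum, hence a critical point, so the critical set is exactly $f^{-1}(0)$, and by (S1) each such point is a non-degenerate local (indeed global) minimum.

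For Part (2) I would first show $\inf_M f=0$. Set $m:=\inf_M f\geqslant 0$ and apply \prettyref{fact:Ekeland} to $u=-f$, whose supremum $-m$ is finite: starting from any minimizing sequence I obtain points $x_n$ with $f(x_n)\to m$ and $\|\dd f(x_n)\|_g\to 0$, whence (S2) gives $m=\lim f(x_n)\leqslant c\lim\|\dd f(x_n)\|_g=0$, so $m=0$. The convergence and uniqueness assertions I would prove by contradiction via \prettyref{lemma:mountainpass}. Suppose $x_{\mathrm{min}}$ attains the minimum $0$ but some sequence with $f(x_n)\to 0$ satisfies $d(x_n,x_{\mathrm{min}})\geqslant\varepsilon>0$. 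Since $x_{\mathrm{min}}$ is a non-degenerate minimum, $f\geqslant\alpha$ on the geodesic sphere $\{d(\cdot,x_{\mathrm{min}})=r\}$ for suitable $0<r<\varepsilon$ and $\alpha>0$; for large $n$ the point $x_n$ then plays the role of the far point ($d(x_n,x_{\mathrm{min}})>r$, $f(x_n)<\alpha$), so \prettyref{lemma:mountainpass} yields a Palais--Smale sequence at a level $c\geqslant\alpha>0$, along which (S2) again forces $f\to 0$, a contradiction. Feeding two distinct zeros of $f$ into the same setup rules out a second critical point.

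Part (3) requires exhibiting a zero of $f$, and this is exactly where (S3) enters. From (S3) one gets a fixed $\delta>0$ with $\|\dd\sqrt f\|_g=\|\dd f\|_g/(2\sqrt f)\geqslant\delta$ wherever $f>0$. If $f$ were strictly positive everywhere, the normalized descending gradient flow of $\sqrt f$ would be a well-defined unit-speed curve along which $\sqrt f$ decreases at rate at least $\delta$; by completeness this finite-length trajectory cannot escape to infinity, so it survives until $\sqrt f$ reaches $0$, within length $\sqrt{f(p)}/\delta$ of its starting point $p$ --- contradicting $f>0$. Hence $f$ vanishes somewhere, and by Part (2) this zero is its unique critical point.

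The most delicate points, and hence where I expect the real work to lie, are: (a) extracting the positive barrier $\alpha$ on a small sphere purely from non-degeneracy of the minimum, so that the hypotheses of \prettyref{lemma:mountainpass} are genuinely satisfied; and (b) in Part (3), justifying via completeness that the unit-speed $\sqrt f$-flow persists for the full length predicted by its linear decay. This last step is precisely where (S3) is indispensable: the weaker (S2) only yields $\|\dd\ln f\|_g\gtrsim 1$, i.e.\ logarithmic decay requiring an infinite-length flow that may run off to infinity without ever reaching a minimum.
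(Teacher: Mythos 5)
Your proposal is correct, and its parts (1)--(2) follow essentially the same route as the paper: (S2) forces critical values to vanish, Ekeland's principle (\prettyref{fact:Ekeland}) applied to $u=-f$ gives $\inf_M f=0$, and the mountain pass lemma (\prettyref{lemma:mountainpass}), fed with a positive barrier $\alpha$ on a small geodesic sphere around $x_{\mathrm{min}}$ together with a far point of small value, produces a Palais--Smale sequence at level $c\geqslant\alpha$ that (S2) immediately contradicts; uniqueness of the critical point follows as you say. Part (3) is where you genuinely diverge. The paper is constructive: it runs the unnormalized gradient flow $\gamma'=-\nabla f(\gamma)$ from an arbitrary point, uses (S3) to get the differential inequality $-h'\geqslant\varepsilon_0 h$ for $h=f\circ\gamma$, hence exponential decay $h(b)\leqslant h(a)e^{-\varepsilon_0(b-a)}$, then bounds the length of the flow line on each unit time interval by $h(n)^{1/2}$ via Cauchy--Schwarz, so the total length is dominated by a convergent geometric series (\prettyref{eq:distestimate}); completeness then keeps the trajectory in a compact set, the flow exists for all time, and its limit is the desired zero of $f$. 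You instead argue by contradiction with the unit-speed descending flow of $\sqrt f$: (S3) gives $\|\nabla\sqrt f\|_g\geqslant\delta>0$ wherever $f>0$, so if $f$ never vanished this unit vector field would be smooth and bounded on a complete manifold, its flow complete, and $\sqrt f$ would decrease at linear rate $\delta$ forever, which is absurd. Your version is shorter and cleaner for the bare existence of the minimum. What the paper's heavier route buys is quantitative control that is reused downstream: the flow-line length estimate \prettyref{eq:distestimate} is exactly what the proof of \prettyref{lemma:exponentialgrowth} invokes to bound $d(x,x_{\mathrm{min}})$ by a multiple of $\ln(1+f(x))$, so if one adopted your argument for part (3), those length estimates would still have to be established separately for the exponential growth lemma.
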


\begin{proof}

It follows from (S2) that if $\dd f(x)=0$, then $f(x)\leqslant|\dd f(x)|=0$ and by (S1) we have $f(x)=0$. So the only critical points are local minima. 

We take $u=-f$ in \prettyref{fact:Ekeland} and obtain a sequence $(x_n)_{n=1}^\infty$ which satisfies that $f(x_n)\to\inf_{M}f$ and $\|\dd f(x_n)\|_g=\|\nabla f(x_n)\|_{g}\to0$. Therefore, by (S2) and (S1), we have that $\inf_{M}f_v=0$.

Now suppose $f$ attains its minimum $0$ at some point $x_{\mathrm{min}}\in M$ and there is a sequence $(x_n)_{n=1}^\infty\in M^{\mathbb{N}}$ such that $\lim_{n\to\infty}f(x_n)=0$. For any $r>0$ small enough, there is a positive number $\alpha>0$ such that $f(x)\geqslant\alpha$ for any $x$ with $d(x,x_{\mathrm{min}})=r$ since $x_{\mathrm{min}}$ is an isolated zero by (S1). 

By $\lim_{n\to\infty}f(x_n)=0$, there exists an integer $N>0$ such that for any $n>N$, $f(x_n)<\alpha$. Then $d(x_{\mathrm{min}},x_n)\leqslant r$ for any $n>N$. Otherwise, there is an $x_{n}$ satisfying $f(x_n)<\alpha$ and $d(x_n,x_{\mathrm{min}})>r$. So $f,x_{\mathrm{min}},x_n$ satisfy the conditions in \prettyref{lemma:mountainpass} and we get there exists a sequence $(y_n)_{n=1}^{\infty}$ such that $f(y_n)>c$ for some positive constant $c>0$ and $\|\nabla f(y_n)\|_{g}\to0$. However, by (S2) we obtain that $f(y_n)\to 0$, contradiction. This shows that $(x_n)_{n=1}^\infty$ converges to $x_{\mathrm{min}}$. In particular, this implies that $f$ cannot have two critical points. 

Below we assume that $f$ satisfies (S3) as well and prove $f$ attains its infimum exactly once by using the gradient flow. For any $x\in M$, let $\gamma\colon[0,t_0)\to M$ be the unique curve satisfying that \[\begin{cases}
    \dfrac{\dd\gamma}{\dd t}(t)=-\nabla f(\gamma(t))\\
    \gamma(0)=x
\end{cases}\]
with the maximum existence time $t_0>0$. Set $h:=f\circ\gamma$, then
\[\begin{aligned}
    &\dfrac{\dd h}{\dd t}(t)\\
    =&\dd f(-\nabla f)(\gamma(t))\\
    =&-\|\nabla f(\gamma(t))\|_g^2\leqslant0
\end{aligned}\]
and there exists $\varepsilon_0>0$ such that
\[-\dfrac{\dd h}{\dd t}\geqslant \varepsilon_0\cdot h\]
by (S3). Hence for any $[a,b]\subset[0,t_0)$, by integrating the equation above, we have 
\[h(b)\leqslant h(a)\cdot\exp(-\varepsilon_0\cdot(b-a)).\]
Then
\[\begin{aligned}
    \left(\operatorname{length}(\gamma|_{[a,b]})\right)^2
    =&\left(\int_a^b\left\|\dfrac{\dd \gamma}{\dd t}\right\|_g\dd t\right)^2\\
    \leqslant&\left(\int_a^b\left\|\dfrac{\dd \gamma}{\dd t}\right\|_g^2\dd t\right)\cdot(b-a)\quad(\mbox{by Chauchy--Schwarz inequality})\\
    =&\left(\int_a^b-\dfrac{\dd h}{\dd t}\dd t\right)\cdot(b-a)\\
    =&\left(h(a)-h(b)\right)\cdot(b-a)\\\leqslant& h(a)\cdot(b-a).
\end{aligned}\]
For any $t\in[0,t_0)$, the above inequality implies that
\begin{equation}\label{eq:distestimate}
\begin{aligned}
    d(\gamma(0),\gamma(t))
    \leqslant&\operatorname{length}(\gamma|_{[0,t]})\\
    =&\sum_{n=1}^{\lfloor t\rfloor-1}\operatorname{length}(\gamma|_{[n,n+1]})+\operatorname{length}(\gamma|_{[\lfloor t\rfloor,t]})\\
    \leqslant&\sum_{n=1}^{\lfloor t\rfloor}h(n)^{1/2}\\
    \leqslant&\sum_{n=1}^{\infty}h(n)^{1/2}\\
    \leqslant&h(0)^{1/2}\cdot\sum_{n=1}^{\infty}\exp(-\varepsilon_0\cdot n/2)\\
    =&h(0)^{1/2}\cdot\dfrac{\exp(-\varepsilon_0/2)}{1-\exp(-\varepsilon_0/2)}.
\end{aligned}    
\end{equation}
Therefore, the image of $\gamma$ stays in a compact subset and this implies that not only $t_0=+\infty$, but also $\gamma(+\infty)=\lim_{t\to+\infty}\gamma(t)$ exists, which is the required minima of $f$.
\end{proof}

As a corollary, we have the following exponential growth lemma of $f$.

\begin{lemma}\label{lemma:exponentialgrowth}
    Suppose that $f$ satisfies (S1)-(S3) and achieves its minimum (which is necessarily exists and unique by \prettyref{thm:index}) at $x_{\mathrm{min}}$.
    Then there exist constants $C_1,C_2,\varepsilon > 0$, such that
\[f(x)\geqslant C_1\cdot\exp(\varepsilon\cdot d(x_{\mathrm{min}},x))-C_2,\forall x\in M,\]
    where $d$ denotes the distance function of $(M,g)$.
\end{lemma}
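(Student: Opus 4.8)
The plan is to run the negative gradient flow of $f$ from an arbitrary point and to convert an estimate on its length into the desired exponential lower bound. Fix $x\in M$ and let $\gamma\colon[0,\infty)\to M$ solve $\frac{\dd\gamma}{\dd t}=-\nabla f(\gamma)$ with $\gamma(0)=x$. By \prettyref{thm:index}, $x_{\mathrm{min}}$ is the unique critical point and the flow converges to it, so $\gamma(t)\to x_{\mathrm{min}}$ as $t\to\infty$ and $\nabla f(\gamma(t))\neq 0$ for all finite $t$. Writing $h(t)=f(\gamma(t))$ we have $\frac{\dd h}{\dd t}=-\|\nabla f\|_g^2$, so $h$ is strictly decreasing from $f(x)$ to $0$; this lets me change variables from $t$ to $h$ and compute
\[
d(x,x_{\mathrm{min}})\;\leqslant\;\operatorname{length}(\gamma)=\int_0^\infty\|\nabla f(\gamma(t))\|_g\,\dd t=\int_0^{f(x)}\frac{\dd h}{\|\nabla f\|_g},
\]
where on the right $\|\nabla f\|_g$ is evaluated at the flow point with $f=h$.

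The core of the argument is then a two-regime estimate of this integral, using the lower bounds (S2) and (S3) in the ranges where each is effective. Near the minimum the integrand $1/\|\nabla f\|_g$ is only controlled by (S3): $\|\nabla f\|_g\gtrsim f^{1/2}$ makes $\int_0^1\dd h/h^{1/2}$ converge and contributes a universal constant. Away from the minimum I would use (S2): $\|\nabla f\|_g\gtrsim f$ gives $\int_1^{f(x)}\dd h/h=\ln f(x)$. Splitting the integral at $h=1$ thus yields, for all $x$ with $f(x)\geqslant 1$,
\[
d(x,x_{\mathrm{min}})\;\leqslant\;C_0+\frac{1}{c}\ln f(x),
\]
for explicit constants $C_0,c>0$ coming from (S3) and (S2) respectively.

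Finally I would invert this inequality. For $f(x)\geqslant 1$ it rearranges directly to $f(x)\geqslant e^{-cC_0}\exp(c\,d(x,x_{\mathrm{min}}))$, which is the claimed bound with $\varepsilon=c$ and $C_1=e^{-cC_0}$. For the remaining points $f(x)<1$, the same integral bounded by its (S3)-part alone shows $d(x,x_{\mathrm{min}})\leqslant C_0$ is bounded, whence $C_1\exp(\varepsilon\,d(x,x_{\mathrm{min}}))\leqslant 1$ there; choosing $C_2=1$ together with $f\geqslant 0$ makes the inequality hold trivially on this bounded region, and combining the two regimes gives the lemma. The point requiring care — the main obstacle — is precisely the non-integrable singularity at $h=0$ that (S2) alone would produce: it is essential that (S3) takes over near the minimum to keep the length finite, and conversely that the logarithm, hence the exponential growth, arises only from (S2) in the far region. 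Checking that the change of variables is legitimate (no interior critical points, $h$ strictly monotone, finite length so that $\gamma(\infty)=x_{\mathrm{min}}$) rests entirely on \prettyref{thm:index}.
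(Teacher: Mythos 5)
Your proof is correct, and it reaches the paper's key intermediate inequality $d(x_{\mathrm{min}},x)\leqslant c_1+c_2\ln(1+f(x))$ by a genuinely different computation. The paper also runs the negative gradient flow, but it works in the time parameter throughout: from (S2) it gets $-h'\geqslant\varepsilon_1 h^2$, hence $1/h(t)-1/h(0)\geqslant\varepsilon_1 t$, so the time needed to halve $h$ is at most $1/(\varepsilon_1 h(0))$; it then bounds the length of each halving segment by a uniform constant via a weighted Cauchy--Schwarz inequality (with weight $t+1$), iterates over roughly $\log_2 f(x)$ halvings, and finally disposes of the region $\{f<2\}$ using the exponential-decay length estimate \prettyref{eq:distestimate} that (S3) yields in the proof of \prettyref{thm:index}. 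Your substitution $t\mapsto h$, turning the length into $\int_0^{f(x)}\dd h/\|\nabla f\|_g$ and splitting at $h=1$, collapses all of this into two elementary integrals: (S3) tames the singularity at $h=0$ (where (S2) alone would give a divergent $\int\dd h/h$), and (S2) produces the logarithm, hence the exponential growth. This is cleaner and more transparent about where each hypothesis enters; the cost is that you must justify the change of variables (strict monotonicity of $h$, no critical points along the flow, convergence $\gamma(t)\to x_{\mathrm{min}}$), which you correctly note rests entirely on \prettyref{thm:index} — exactly the same input the paper uses to guarantee the flow is global and converges. Your handling of the edge cases ($f(x)<1$, and implicitly $x=x_{\mathrm{min}}$, where $C_2=1\geqslant C_1$ makes the bound trivial) is also sound, so no gap remains.
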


\begin{proof}
    It suffices to prove that there exist constants $c_1,c_2>0$ such that
    \[d(x_{\mathrm{min}},x)\leqslant c_1+c_2\cdot\ln(1+f(x)).\]

    Now fix an arbitrary $x\in M$. We make use of the integral curve $\gamma$ of the gradient flow again, i.e. 
   \[\begin{cases}
    \dfrac{\dd\gamma}{\dd t}(t)=-\nabla f(\gamma(t))\\
    \gamma(0)=x
\end{cases}\]
and we know that the maximal existence time of $\gamma$ is $+\infty$ with $\gamma(+\infty)=x_{\mathrm{min}}$ by \prettyref{thm:index} and we set $h=f\circ\gamma$ again. Then
\[-\dfrac{\dd h}{\dd t}(t)=\|\nabla f(\gamma(t))\|_{g}^2\geqslant\varepsilon_1\cdot (h(t))^2\]
for some constant $\varepsilon_1>0$ by (S2). Hence 
\[\dfrac{\dd}{\dd t}\left(\dfrac{1}{h(t)}\right)\geqslant\varepsilon_1\]
and by integrating we obtain that for any $t>0$, 
\[\dfrac{1}{h(t)}-\dfrac{1}{h(0)}\geqslant\varepsilon_1\cdot t.\]

If $h(0)<2$, then by \prettyref{eq:distestimate} we obtain that
\begin{equation}\label{eq:smalldistestimate}
    d(x,x_{\mathrm{min}})\leqslant\sqrt{2}\cdot\dfrac{\exp(-\varepsilon_0/2)}{1-\exp(-\varepsilon_0/2)}=:c_1,
\end{equation}
where $\varepsilon_0>0$ is the constant such that $\|\nabla f\|_{g}^2\geqslant\varepsilon_0 \cdot f$ which is provided by (S3).

Since $h(+\infty)=0$, there exists $t_1>0$ such that $h(t_1)=h(0)/2$. Thus $t_1\leqslant1/(\varepsilon_1\cdot h(0))$. Moreover, we obtain that
\[\begin{aligned}
    d(x,\gamma(t_1))
    \leqslant&\operatorname{length}(\gamma|_{[0,t_1]})\\
    =&\int_{0}^{t_1}\left\|\dfrac{\dd\gamma}{\dd t}\right\|_{g}\dd t\\
    =&\int_{0}^{t_1}\left\|\nabla f(\gamma(t))\right\|_{g}\dd t\\
    \leqslant&\sqrt{\left(\int_{0}^{t_1}\dfrac{\dd t}{t+1}\right)\cdot\left(\int_{0}^{t_1}(t+1)\cdot\left\|\nabla f(\gamma(t))\right\|_{g}^2\dd t\right)}\quad(\mbox{by Cauchy--Schwarz inequality})\\
    \leqslant&\sqrt{\ln(t_1+1)\cdot(t_1+1)\cdot\left(\int_{0}^{t_1}\left\|\nabla f(\gamma(t))\right\|_{g}^2\dd t\right)}\\
    =&\sqrt{\ln(t_1+1)\cdot(t_1+1)\cdot\left(\int_{0}^{t_1}-\dfrac{\dd h}{\dd t}(t)\dd t\right)}\\
    \leqslant& \sqrt{\ln(t_1+1)\cdot(t_1+1)\cdot\dfrac{h(0)}{2}}\\
    \leqslant& \sqrt{t_1\cdot(t_1+1)\cdot\dfrac{h(0)}{2}}\\
    \leqslant&\sqrt{\dfrac{t_1+1}{2\varepsilon_1}}. 
\end{aligned}\]

Now suppose that $h(0)=f(x)\geqslant2$. We know that $t_1\leqslant{1}/{(2\cdot\varepsilon_1)}$. Therefore,
\[d(x,\gamma(t_1))\leqslant\sqrt{\dfrac{2\varepsilon_1+1}{(2\varepsilon_1)^2}}=:c_3\]
Let $k=\lfloor \log_2f(x)\rfloor$. We can find $0=t_0<t_1<t_2<\cdots<t_k$ such that $h(t_{i+1})=h(t_i)/2$ and $h(t_i)\geqslant2$ for any $i=0,1,\dots,k-1$ and $h(t_k)<2$. From the above discussion, we know that
\[d(x,\gamma(t_k))\leqslant\sum_{i=1}^{k}d(\gamma(t_{i-1}),\gamma(t_i))\leqslant c_3\cdot k\leqslant c_3\cdot\log_2 f(x)=c_2\cdot\ln f(x)\]
for a constant $c_2>0$. Moreover, since $h(t_k)<2$, by \prettyref{eq:smalldistestimate} we get that
\[d(x,x_{\mathrm{min}})\leqslant d(x,\gamma(t_k))+d(\gamma(t_k),x_{\mathrm{min}})\leqslant c_2\cdot\ln f(x)+c_1\leqslant c_2\cdot\ln (f(x)+1)+c_1\]
when $f(x)\geqslant2$. Also we have
\[d(x,x_{\mathrm{min}})\leqslant c_1\leqslant c_2\cdot\ln (f(x)+1)+c_1\]
when $f(x)<2$. 
\end{proof}

\section{Proof of Main Results}\label{sec:main}

In this section we prove that stable $\alpha_1$-cyclic parabolic $\SO_0(2,3)$-Higgs bundles with non-zero weight satisfy some domination properties. Following Filip's strategy, to establish the $\log$-Anosov property, we need to choose suitable function $f$ satisfying (S1)-(S3) and use the estimates given in \prettyref{sec:estimates}. 

Below we fix a stable $\alpha_1$-cyclic parabolic $\SO_0(2,3)$-Higgs bundle $(\EE,\Phi)$ represented by \prettyref{eq:cyc}
\[
\begin{tikzcd}
	{\mathcal{L}_{-2}} & {\mathcal{L}_{-1}} & {\mathcal{L}_{0}} & {\mathcal{L}_{1}} & {\mathcal{L}_{2}}
	\arrow["\tau^\vee"', from=1-1, to=1-2]
	\arrow["\beta^\vee"', from=1-2, to=1-3]
	\arrow["\beta"', from=1-3, to=1-4]
	\arrow["\tau"', from=1-4, to=1-5]
	\arrow["\gamma"', curve={height=18pt}, from=1-5, to=1-2]
	\arrow["\gamma^\vee"', curve={height=18pt}, from=1-4, to=1-1]
\end{tikzcd}
\]
satisfying assumption A, of non-zero weights and $\operatorname{pardeg}(\LL_1)<\deg(\KK_\oX(D))$.
By \prettyref{lemma:metricsplit}, the corresponding harmonic metric $h$ splits as $\oplus_{i=-2}^2h_i$ and the corresponding real subbundle $\EE_\mathbb{R}=\oplus_{i=0}^2\left(\LL_i\right)_{\mathbb{R}}$ is given in \prettyref{lemma:realstructure}. The Chern connection $\nabla^{h}$ induced by the harmonic metric and the flat connection $\mathrm{D}^h=\nabla^{h}+\Phi+\Phi^{*_{h}}$ are introduced in \prettyref{fact:harmonicmetricG}. When $\gamma=0$, $\mathrm{D}^h$ is the Gauss--Manin connection in \cite{filip2021uniformization}. 

We lift $(\EE,\Phi)$ to the universal cover on $\widetilde{X}\cong\mathbb{H}^2$ with respect to the flat connection $\mathrm{D}^h$, i.e. $\mathrm{D}^h$ is lifted to the trivial connection $\dd$ on the trivial vector bundle. With a slight abuse of notation, in this section we use $\EE,\Phi,\LL_i,\tau,h,h_{i},(\LL_i)_{\mathbb{R}}$ to denote their lift. 

Now we fix a real vector $v\in(\EE_{\mathbb{R}})_{\widetilde{x_0}}=\left(\bigoplus_{i=0}^2(\LL_i)_{\mathbb{R}}\right)_{\widetilde{x_0}}$ over the basepoint $\widetilde{x_0}\in\widetilde{X}$ satisfying that
\[(h_\UU\oplus(-h_\VV))(v,v)=1.\] It can be extended to a global section $v\colon\mathbb{H}^2\to\EE_{\mathbb{R}}$ with respect to $\mathrm{D}^h$. And it splits into $\sum_{i=-2}^2v_i$, where $v_i\colon\mathbb{H}^2\to\LL_i$ are global smooth sections of $\LL_i$ and $v_{-i}=\overline{v_i}$.  Note that $h_\UU\oplus(-h_\VV)$ is flat along $\mathrm{D}^h$, hence
\[2\|v_1\|_h^2-(2\|v_2\|_h^2+\|v_0\|_h^2)\equiv1.\]
This implies that
$\|v_1\|_h\gtrsim1$ and $\|v_1\|_h\gtrsim\|v_2\|_h$.

Let $f_v:=\|v_2\|_h^2$, we will show that $f_v$ satisfies conditions (S1)-(S3). When $\gamma=0$, this has been proven in \cite[Section 2.2]{filip2021uniformization}.

\subsection{Establish (S1)-(S3) for \texorpdfstring{$f_v$}{fv}}\label{sec:fvS}

It is trivial that $f_v\geqslant0$. Below we first establish conditions (S2) and (S3) for $f_v$ and then prove $f_v$ is a Morse function.

By projecting the equation $\mathrm{D}^h(v)=0$ onto $\LL_2$, we obtain that 
\begin{equation}\label{eq:firstderi}
    \nabla^{h}(v_2)=-\tau(v_1)-\gamma^{*_{h}}(v_{-1}).
\end{equation}

\begin{lemma}
    $f_v$ satisfies conditions (S2)(S3), i.e.
    \[\|\dd f_v\|_{g_{\mathrm{hyp}}^\vee}\gtrsim f_v,\|\dd f_v\|_{g_{\mathrm{hyp}}^\vee}\gtrsim f_v^{1/2}.\]
\end{lemma}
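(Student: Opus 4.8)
The plan is to differentiate $f_v=\|v_2\|_h^2$ using the metric-compatibility of the Chern connection and then exploit that the resulting terms live in the one-dimensional fibers of $\KK_X$. First I would split \prettyref{eq:firstderi} by bidegree. Since $\alpha$ is $\KK_X$-valued and $\gamma^{*_h}$ is $\overline{\KK_X}$-valued, the identity $\nabla^{Ch}v_2=-\alpha(v_1)-\gamma^{*_h}(v_{-1})$ decomposes as $(\nabla^{Ch})^{1,0}v_2=-\alpha(v_1)$ and $\bar\partial v_2=-\gamma^{*_h}(v_{-1})$. As $h_2$ is compatible with $\nabla^{Ch}$, the $(1,0)$-part of $\dd f_v$ is
\[\partial f_v = h_2\big((\nabla^{Ch})^{1,0}v_2,\,v_2\big)+h_2\big(v_2,\,\bar\partial v_2\big)=-h_2(\alpha(v_1),v_2)-h_2(v_2,\gamma^{*_h}(v_{-1})).\]
Because $f_v$ is real we have $\bar\partial f_v=\overline{\partial f_v}$, so $\|\dd f_v\|_{g_{\mathrm{hyp}}^\vee}$ is comparable to $\|\partial f_v\|_{g_{\mathrm{hyp}}^\vee}$ and it suffices to bound the latter from below.

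Next I would compute the two terms exactly. Both $h_2(\alpha(v_1),v_2)$ and, after the conjugation built into the second slot, $h_2(v_2,\gamma^{*_h}(v_{-1}))$ are $(1,0)$-forms, i.e.\ local sections of $\KK_X$. Since $\LL_1$ and $\LL_2$ are line bundles, $\alpha(v_1)$ and $v_2$ are automatically parallel in $\LL_2$, so there is no Cauchy--Schwarz loss and $\|h_2(\alpha(v_1),v_2)\|_{g_{\mathrm{hyp}}^\vee}=\|\alpha\|\cdot\|v_1\|_h\cdot\|v_2\|_h$. Likewise $\|h_2(v_2,\gamma^{*_h}(v_{-1}))\|_{g_{\mathrm{hyp}}^\vee}=\|\gamma\|\cdot\|v_{-1}\|_h\cdot\|v_2\|_h=\|\gamma\|\cdot\|v_1\|_h\cdot\|v_2\|_h$, where $\|v_{-1}\|_h=\|v_1\|_h$ follows from $v_{-1}=\overline{v_1}$ together with $h_{-1}=h_1^\vee$ (\prettyref{lemma:metricsplit} and \prettyref{lemma:realstructure}), conjugation being an isometry.

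Now the two contributions are scalar multiples of a single generator of $\KK_X$ at each point, so they could a priori cancel; this is exactly where \prettyref{prop:boundedness} enters. Applying the reverse triangle inequality (valid since $\|\alpha\|>\|\gamma\|$) gives
\[\|\partial f_v\|_{g_{\mathrm{hyp}}^\vee}\;\geqslant\;\big(\|\alpha\|-\|\gamma\|\big)\cdot\|v_1\|_h\cdot\|v_2\|_h\;\gtrsim\;\|v_1\|_h\cdot\|v_2\|_h,\]
using that $\|\alpha\|-\|\gamma\|$ is bounded below by a positive constant over all of $X$. Finally I would invoke the two consequences of the flat constraint $2\|v_1\|_h^2-(2\|v_2\|_h^2+\|v_0\|_h^2)\equiv1$ recorded just above, namely $\|v_1\|_h\gtrsim 1$ and $\|v_1\|_h\gtrsim\|v_2\|_h$: the first yields $\|\dd f_v\|_{g_{\mathrm{hyp}}^\vee}\gtrsim\|v_2\|_h=f_v^{1/2}$, which is (S3), and the second yields $\|\dd f_v\|_{g_{\mathrm{hyp}}^\vee}\gtrsim\|v_2\|_h^2=f_v$, which is (S2).

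The only delicate point is precisely the possible cancellation between the $\alpha$- and $\gamma$-contributions to $\partial f_v$: a crude triangle-inequality upper bound would be worthless, and a genuine lower bound is available only because the harmonic metric forces a uniform positive gap $\|\alpha\|-\|\gamma\|>C$ on all of $X$. Everything else is bidegree bookkeeping plus the line-bundle geometry that makes the relevant norms exact rather than merely comparable, so I expect no further obstacle once \prettyref{prop:boundedness} is in hand.
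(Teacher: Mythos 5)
Your proposal is correct and follows essentially the same route as the paper's proof: differentiate $f_v$ via metric compatibility and \prettyref{eq:firstderi}, isolate the $(1,0)$-component where the $\alpha$- and $\gamma$-terms coexist, compute both norms exactly using the line-bundle structure, and apply the reverse triangle inequality with the uniform gap from \prettyref{prop:boundedness} before invoking $\|v_1\|_h\gtrsim 1$ and $\|v_1\|_h\gtrsim\|v_2\|_h$. The only cosmetic difference is that you split by bidegree at the level of \prettyref{eq:firstderi} and work with $\partial f_v$, while the paper computes $\dd f_v$ in full and extracts the same $(1,0)$-part with an explicit $\sqrt{2}$ factor; the substance is identical.
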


\begin{proof}
    \[\begin{aligned}
        \|\dd f_v\|_{g_{\mathrm{hyp}}^\vee}
        =&\|\dd h(v_2,v_2)\|_{g_{\mathrm{hyp}}^\vee}\\
        =&\|h(\nabla^{h}(v_2),v_2)+h(v_2,\nabla^{h}(v_2))\|_{g_{\mathrm{hyp}}^\vee}\\
        =&\|h(\tau(v_1)+\gamma^{*_h}(v_{-1}),v_2)+h(v_2,\tau(v_1)+\gamma^{*_h}(v_{-1}))\|_{g_{\mathrm{hyp}}^\vee}\quad(\mbox{by \prettyref{eq:firstderi}})\\
        =&\sqrt{2}\cdot\|h(\gamma^{*_h}(v_{-1}),v_2)+h(v_2,\tau(v_1))\|_{g_{\mathrm{hyp}}^\vee}\quad(\mbox{since }\dd z\perp\dd\bar z\mbox{ and }\|\dd z\|=\|\dd\bar z\|)\\
        \geqslant&\sqrt{2}\cdot\left|\|h(v_2,\tau(v_1))\|_{g_{\mathrm{hyp}}^\vee}-\|h(v_{-1},\gamma(v_2))\|_{g_{\mathrm{hyp}}^\vee}\right|\\
        =&\sqrt{2}\cdot\left(\|\tau\|-\|\gamma\|\right)\cdot\|v_1\|_h\cdot\|v_2\|_h\\
        \gtrsim&\|v_1\|_h\cdot\|v_2\|_h\quad(\mbox{by \prettyref{prop:boundedness}})\\
        \gtrsim&\begin{cases}
            \|v_2\|_h^2=f_v&\mbox{since }\|v_1\|_h\gtrsim\|v_2\|_h,\\
            \|v_2\|_h=f_v^{1/2}&\mbox{since }\|v_1\|_h\gtrsim1.
        \end{cases}
    \end{aligned}\]
\end{proof}

\begin{lemma}
    $f_v$ is a Morse function. Furthermore, $f_v$ satisfies the condition (S1).
\end{lemma}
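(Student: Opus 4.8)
The plan is to determine the critical set from the already-established bound and then compute the Hessian there. Since the previous lemma gives $\|\dd f_v\|_{g_{\mathrm{hyp}}^\vee}\gtrsim f_v$ and $f_v\geqslant 0$, any critical point $p$ must satisfy $f_v(p)=0$, i.e. $v_2(p)=0$; conversely $\dd f_v=2\operatorname{Re}h(\nabla^{Ch}v_2,v_2)$ vanishes wherever $v_2=0$. Hence the critical set of $f_v$ is exactly the zero locus of $v_2$, and there $f_v$ attains its minimum $0$. It therefore remains to verify that each such critical point is non-degenerate.

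Next I would compute the Hessian at a zero $p$ of $v_2$. Because $v_2(p)=0$, the second derivative of $f_v=h(v_2,v_2)$ only involves the first covariant derivative of $v_2$, so the Hessian is the positive-semidefinite quadratic form
\[\operatorname{Hess}f_v(\xi,\eta)=2\operatorname{Re}h\!\left(\nabla^{Ch}_\xi v_2,\nabla^{Ch}_\eta v_2\right),\qquad\xi,\eta\in T_p\mathbb{H}^2.\]
Thus $\operatorname{Hess}f_v$ is non-degenerate precisely when the real-linear map $\nabla^{Ch}v_2(p)\colon T_p\mathbb{H}^2\to(\LL_2)_p$ between these two real two-dimensional spaces is an isomorphism.

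To analyze this map I would separate it into types by \eqref{eq:firstderi}: its $(1,0)$-part is $-\alpha(v_1)$, which is $\mathbb{C}$-linear, while its $(0,1)$-part is $-\gamma^{*_h}(v_{-1})$, which is $\mathbb{C}$-antilinear. For a map between complex lines, the sum of a $\mathbb{C}$-linear piece with scalar $a$ and a $\mathbb{C}$-antilinear piece with scalar $b$ has real Jacobian determinant $|a|^2-|b|^2$; since $\|\dd z\|_{g_{\mathrm{hyp}}^\vee}=\|\dd\bar z\|_{g_{\mathrm{hyp}}^\vee}$, this determinant is a positive multiple of $\|\alpha(v_1)\|_h^2-\|\gamma^{*_h}(v_{-1})\|_h^2$. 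Using that the adjoint preserves norms and that $v_{-1}=\overline{v_1}$ has the same $h$-norm as $v_1$ (by the real structure of \prettyref{lemma:realstructure}), I obtain $\|\alpha(v_1)\|_h=\|\alpha\|\cdot\|v_1\|_h$ and $\|\gamma^{*_h}(v_{-1})\|_h=\|\gamma\|\cdot\|v_1\|_h$.

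Finally I would invoke the gap. At $p$ the flatness identity $2\|v_1\|_h^2-(2\|v_2\|_h^2+\|v_0\|_h^2)\equiv1$ together with $v_2(p)=0$ forces $\|v_1\|_h\geqslant 1/\sqrt2>0$, and \prettyref{prop:boundedness} yields $\|\alpha\|>\|\gamma\|$. Hence the Jacobian determinant is a positive multiple of $(\|\alpha\|^2-\|\gamma\|^2)\|v_1\|_h^2>0$, so $\operatorname{Hess}f_v$ is positive definite at every critical point. This proves $f_v$ is a non-negative Morse function, i.e. condition (S1). The main obstacle is the clean reduction of the Hessian to $\|\nabla^{Ch}v_2\|_h^2$ and the linear/antilinear determinant computation; once the two types are separated, strict positivity follows at once from the $\alpha$--$\gamma$ gap of \prettyref{prop:boundedness}.
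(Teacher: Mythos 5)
Your proof is correct and takes essentially the same route as the paper: critical points are identified with the zeros of $v_2$ via (S2), the Hessian there reduces to $2\operatorname{Re}h\left(\nabla^{Ch}_{\bullet}v_2,\nabla^{Ch}_{\bullet}v_2\right)$, and non-degeneracy follows from splitting $\nabla^{Ch}v_2$ via \prettyref{eq:firstderi} into the $\alpha(v_1)$ and $\gamma^{*_h}(v_{-1})$ parts and invoking the gap of \prettyref{prop:boundedness} together with $\|v_1\|_h\gtrsim 1$. The only difference is in packaging: you compute the real Jacobian determinant of the linear-plus-antilinear map $w\mapsto sw+t\bar w$ as $|s|^2-|t|^2$, whereas the paper writes out the coordinate Hessian matrix and factors its determinant as $\left[\overline{(s+t)}(s-t)+\overline{(s-t)}(s+t)\right]^2$ — the same quantity, with your version making the implicit use of $\|v_1\|_h>0$ explicit.
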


\begin{proof}
    Since $\|\dd f_v\|_{g_{\mathrm{hyp}}^\vee}\gtrsim f_v$, we obtain that the critical points of $f_v$ only occur when $v_2(x)=0$. Now we compute the Hessian of $f_v$ at its critical points. Given two real vector fields $T_1,T_2$ around a critical point $x$. Since $v_2(x)=0$, by the compatibility between $\nabla^{h}$ and the Hermitian metric $h$, one can readily check that
    \begin{equation}\label{eq:secondderi}
        T_1T_2(f_v)(x)=\left(h(\nabla_{T_1}^{h}(v_2),\nabla_{T_2}^{h}(v_2))+h(\nabla_{T_2}^{h}(v_2),\nabla_{T_1}^{h}(v_2))\right)(x).
    \end{equation}
    Now we take a local unit frame $e_2$ of $\LL_2$ around $x$. Locally we have $\tau(v_1)(\partial/\partial z)=se_2$ and $\gamma^{*_h}(v_{-1})(\partial/\partial\bar z)=te_2$ for some complex-valued smooth functions $s$ and $t$. By \prettyref{prop:boundedness}, $\|v_1\|_h=\|v_{-1}\|_h$ and $\|\partial/\partial z\|_{g_{\mathrm{hyp}}}=\|\partial/\partial \bar z\|_{g_{\mathrm{hyp}}}$ we obtain that $|s|>|t|$.

    With respect to the natural coordinate basis $\partial/\partial x,\partial/\partial y$, a quick calculation shows that the coordinate Hessian of $f_v$ at $x_0$ can be represented as
    \[\begin{pmatrix}
        2|s+t|^2&\iu\left[\overline{(s+t)}(s-t)-\overline{(s-t)}(s+t)\right]\\\iu\left[\overline{(s+t)}(s-t)-\overline{(s-t)}(s+t)\right]&2|s-t|^2
    \end{pmatrix}.\]
    It suffices to prove that the determinant of above matrix is not $0$. Actually, the determinant is
    \[\begin{aligned}
        &4\left[|s+t|^2\cdot|s-t|^2\right]+\left[\overline{(s+t)}(s-t)-\overline{(s-t)}(s+t)\right]^2\\=&\left[\overline{(s+t)}(s-t)+\overline{(s-t)}(s+t)\right]^2\geqslant0
    \end{aligned}\]
    and the ``='' holds iff \[\begin{aligned}
        &\overline{(s+t)}(s-t)+\overline{(s-t)}(s+t)=0\\
        \iff&\overline{(s-t)}(s+t)=r\iu\mbox{ for some real }r\\
        \iff& s=\dfrac{r\iu+1}{r\iu-1}t\mbox{ for some real }r\\
        \implies&|s|=|t|,
    \end{aligned}\]
    contradiction.
\end{proof}

\begin{remark}
    The last step above to avoid the equality holds has the following Euclidean geometric illustration: The two diagonals of a parallelogram are perpendicular if and only if the parallelogram is a diamond.
\end{remark}

\begin{remark}
    In \cite{filip2021uniformization}, the associated Higgs bundle of the RVHS has a vanishing $\gamma$, so $v_2$ is a holomorphic section. Then the non-degeneration of the critical points of $f_v$ can be easily proven by the holomorphicity. When $\gamma\neq 0$, we must use the estimates \prettyref{prop:boundedness} of Higgs fields to show the non-degeneration.
\end{remark}

Therefore, $f_v$ satisfies conditions (S1)-(S3) and by \prettyref{lemma:exponentialgrowth} we have the following corollary:

\begin{corollary}\label{coro:exp}
    There exist constants $C_1,C_2,\varepsilon > 0$ independent of $v$ such that
\[f_v(x)\geqslant C_1\cdot\exp(\varepsilon\cdot d(x_{\mathrm{min}},x))-C_2,\forall x\in \widetilde{X},\]
where $x_{\mathrm{min}}$ is the unique point such that $v_2(x_{\mathrm{min}})=0$. 
\end{corollary}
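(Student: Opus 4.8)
The plan is to apply the abstract index estimates of \prettyref{sec:estimates} to the function $f_v = \|v_2\|_h^2$ on the complete Riemannian manifold $(\widetilde{X}, g_{\mathrm{hyp}}) \cong (\mathbb{H}^2, g_{\mathrm{hyp}})$. The two lemmas immediately preceding this corollary have already verified that $f_v$ satisfies conditions (S1)--(S3), so the abstract machinery of \prettyref{thm:index} and \prettyref{lemma:exponentialgrowth} applies directly. What remains decomposes into three tasks: first, to identify the minimizer $x_{\mathrm{min}}$ and confirm its existence and uniqueness; second, to extract the exponential lower bound; and third---the only genuinely new content beyond quoting \prettyref{lemma:exponentialgrowth}---to check that the resulting constants $C_1, C_2, \varepsilon$ can be chosen independently of $v$.

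For the first two tasks, since $f_v$ satisfies (S1)--(S3) on the complete manifold $\mathbb{H}^2$, part (3) of \prettyref{thm:index} guarantees that $f_v$ has \emph{precisely one} critical point, at which it attains its minimum value $\inf_{\widetilde X} f_v = 0$. As observed in the Morse lemma above, the critical points of $f_v$ occur exactly at the zeros of $v_2$; hence this unique critical point is the unique point $x_{\mathrm{min}}$ with $v_2(x_{\mathrm{min}}) = 0$, and the minimizer is automatically supplied. Applying \prettyref{lemma:exponentialgrowth} to $f_v$ then produces constants $C_1, C_2, \varepsilon > 0$ with $f_v(x) \geqslant C_1 \exp(\varepsilon \cdot d(x_{\mathrm{min}}, x)) - C_2$ for all $x \in \widetilde X$.

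For the uniformity in $v$, I would trace the dependence of the constants produced by \prettyref{lemma:exponentialgrowth}. Inspecting its proof, the output constants depend only on the two implied constants $\varepsilon_0$ (from (S3), $\|\nabla f_v\|_g^2 \gtrsim f_v$) and $\varepsilon_1$ (from (S2), $\|\nabla f_v\|_g^2 \gtrsim f_v^2$), so it suffices to show that (S2) and (S3) hold for $f_v$ with constants uniform in $v$. Reexamining the chain of inequalities establishing (S2)(S3), the decisive step $\|\dd f_v\|_{g_{\mathrm{hyp}}^\vee} \gtrsim \|v_1\|_h \|v_2\|_h$ uses only the gap $\|\alpha\| - \|\gamma\| > C$ of \prettyref{prop:boundedness}, whose constant $C$ is a feature of the fixed Higgs bundle and is manifestly independent of $v$; and the two final reductions invoke $\|v_1\|_h \gtrsim 1$ and $\|v_1\|_h \gtrsim \|v_2\|_h$, both consequences of the single normalization identity $2\|v_1\|_h^2 - 2\|v_2\|_h^2 - \|v_0\|_h^2 \equiv 1$ with absolute constants. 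Thus the main point is bookkeeping rather than analysis: the obstacle is simply to verify that every constant entering the argument is traced back either to the fixed Higgs-bundle data via \prettyref{prop:boundedness} or to the flat, $v$-independent quadratic form $h_\UU \oplus (-h_\VV)$, and never to the particular section $v$. Granting this, $\varepsilon_0, \varepsilon_1$, and hence $C_1, C_2, \varepsilon$, may be fixed once and for all, which is exactly the uniformity asserted.
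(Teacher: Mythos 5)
Your proposal is correct and follows exactly the paper's route: the paper's own (one-line) proof likewise just invokes \prettyref{lemma:exponentialgrowth} (with \prettyref{thm:index} supplying the unique minimizer at the zero of $v_2$) and observes that the implied constants in $\|\dd f_v\|_{g_{\mathrm{hyp}}^\vee}\gtrsim f_v$ and $\|\dd f_v\|_{g_{\mathrm{hyp}}^\vee}\gtrsim f_v^{1/2}$ are independent of $v$. Your tracing of those constants back to \prettyref{prop:boundedness} and the flat normalization $2\|v_1\|_h^2-2\|v_2\|_h^2-\|v_0\|_h^2\equiv1$ is precisely the bookkeeping the paper leaves implicit.
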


\begin{proof}
    It follows from that the constants appear in
    $\|\dd f_v\|_{g_{\mathrm{hyp}}^\vee}\gtrsim f_v$, $\|\dd f_v\|_{g_{\mathrm{hyp}}^\vee}\gtrsim f_v^{1/2}$ are independent of $v$.
\end{proof}

\subsection{Establish the Domination Property}\label{sec:Anosov}

Below we will prove our main theorem \prettyref{thm:main}. Recall the notation of Cartan projection, almost-dominated representation, non-Abelian Hodge correspondence and $\{\alpha_1\}$-cyclic parabolic $\SO_0(2,3)$-Higgs bundle defined in \prettyref{sec:pre} and \prettyref{sec:Higgsestimate}.

\begin{theorem}\label{thm:soAnosov}
    For any stable $\alpha_1$-cyclic parabolic $\SO_0(2,3)$-Higgs bundle $(\EE,\Phi):=$ 
\[\begin{tikzcd}
	{\mathcal{L}_{-2}} & {\mathcal{L}_{-1}} & {\mathcal{L}_{0}} & {\mathcal{L}_{1}} & {\mathcal{L}_{2}}
	\arrow["\tau^\vee"', from=1-1, to=1-2]
	\arrow["\beta^\vee"', from=1-2, to=1-3]
	\arrow["\beta"', from=1-3, to=1-4]
	\arrow["\tau"', from=1-4, to=1-5]
	\arrow["\gamma"', curve={height=18pt}, from=1-5, to=1-2]
	\arrow["\gamma^\vee"', curve={height=18pt}, from=1-4, to=1-1]
\end{tikzcd}\]
satisfying assumption A, of nonzero weights and $\operatorname{pardeg}(\LL_1)<\deg(\KK_\oX(D))$, then its corresponding representation
    $\rho:=\mathsf{NAH}((\EE,\Phi))$ satisfies that \[\alpha_2(\mu(\rho(\sigma)))\geqslant C_3\cdot d(\widetilde{x_0},\widetilde{x_0}\cdot\sigma)-C_4,\]
    where $\mu$ denotes the Cartan projection and $d$ denotes the hyperbolic distance on $\oX$, for some constant $C_3,C_4>0$ which are independent of our choice of $\sigma$, i.e. $\rho$ is $\{\alpha_{2}\}$-almost dominated.
\end{theorem}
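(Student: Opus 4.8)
The plan is to convert the exponential growth estimate of \prettyref{coro:exp} into a lower bound on the second singular value of $\rho(\sigma)$, which is precisely $\alpha_2(\mu(\rho(\sigma)))$. First I would fix the flat trivialization on $\widetilde{X}$ so that at the basepoint the positive definite harmonic form $h_\UU\oplus h_\VV$ restricts on $\EE_{\mathbb R}$ to the standard $K$-invariant inner product $\|\cdot\|_0$ defining $K=\SO(2)\times\SO(3)$. Reading off the root data of \prettyref{example:sopq}, an element $\exp(\mu)$ with $\mu\in\overline{\mathfrak{a}^+}$ and $\theta_1(\mu)=a_1\geqslant\theta_2(\mu)=a_2\geqslant 0$ acts on $\mathbb{R}^5$ with singular values $e^{a_1}\geqslant e^{a_2}\geqslant 1\geqslant e^{-a_2}\geqslant e^{-a_1}$ relative to $\|\cdot\|_0$; since the opposition involution is trivial this gives the key identity $\alpha_2(\mu(\rho(\sigma)))=\log s_2(\rho(\sigma))=\log s_2(\rho(\sigma)^{-1})$, where $s_2$ is the second largest singular value with respect to $\|\cdot\|_0$.

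Next I would use the equivariance of the harmonic metric. For a flat section $v$ (constant in the trivialization) equivariance gives $\|\rho(\sigma)^{-1}v\|_0^2=\|v\|^2_{h_\UU\oplus h_\VV}$ evaluated at $\widetilde{x_0}\cdot\sigma$. Using the flat normalization $2\|v_1\|_h^2-2\|v_2\|_h^2-\|v_0\|_h^2\equiv 1$ one computes the full positive norm as $\|v\|^2_{h_\UU\oplus h_\VV}=1+4\|v_2\|_h^2+2\|v_0\|_h^2\geqslant 1+4f_v$, so that
\[\|\rho(\sigma)^{-1}v\|_0^2\ \geqslant\ 1+4\,f_v(\widetilde{x_0}\cdot\sigma).\]
The crucial choice is the $2$-dimensional probe subspace $W:=(\LL_1)_{\mathbb R}=\UU_{\mathbb R}\subset\EE_{\mathbb R}$ at the basepoint, on which the indefinite and positive definite forms coincide (so $\|v\|_0=1$ is the same as the normalization above). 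Every $v\in W$ has $v_2(\widetilde{x_0})=0$, hence by \prettyref{thm:index}(3) its unique zero equals $x_{\mathrm{min}}(v)=\widetilde{x_0}$, and \prettyref{coro:exp} then yields, uniformly over unit vectors $v\in W$,
\[\|\rho(\sigma)^{-1}v\|_0^2\ \geqslant\ 1-4C_2+4C_1\exp\bigl(\varepsilon\, d(\widetilde{x_0},\widetilde{x_0}\cdot\sigma)\bigr).\]

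Finally I would apply the Courant--Fischer characterization $s_2(A)=\max_{\dim S=2}\min_{0\neq w\in S}\|Aw\|_0/\|w\|_0$ with $A=\rho(\sigma)^{-1}$ and $S=W$: since the right-hand side above is independent of $v$, this gives $s_2(\rho(\sigma)^{-1})^2\geqslant 1-4C_2+4C_1\exp(\varepsilon d)$. Taking logarithms and using $\alpha_2(\mu(\cdot))\geqslant 0$ to absorb the bounded-distance regime into the additive constant, I obtain $\alpha_2(\mu(\rho(\sigma)))\geqslant C_3\,d(\widetilde{x_0},\widetilde{x_0}\cdot\sigma)-C_4$ for constants independent of $\sigma$, which is the desired $\{\alpha_2\}$-almost domination.

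The main obstacle is organizational rather than computational: one must pass from the analytic object $f_v$, which controls the growth of a \emph{single} direction, to the \emph{second} singular value, which requires producing a full $2$-plane of uniformly expanding directions sharing a common minimum point. The decisive observation is that the positive definite $2$-plane $\UU_{\mathbb R}$ simultaneously annihilates the $\LL_0$ and $\LL_2$ components at $\widetilde{x_0}$ (forcing $x_{\mathrm{min}}=\widetilde{x_0}$ for the whole plane) and carries the normalization $\|v\|_0=1$; this is exactly what lets the min--max step run with constants uniform in $\sigma$, and it is the step where the gap estimate \prettyref{prop:boundedness} enters through \prettyref{coro:exp}.
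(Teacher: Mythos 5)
Your proposal is correct, and its analytic heart is exactly the paper's: the equivariance of the harmonic metric, the flat normalization $2\|v_1\|_h^2-2\|v_2\|_h^2-\|v_0\|_h^2\equiv 1$, and the observation that every unit vector $v$ in the plane $\UU_{\mathbb{R}}=(\LL_1)_{\mathbb{R}}$ over $\widetilde{x_0}$ has $f_v(\widetilde{x_0})=0$, so that \prettyref{coro:exp} applies with $x_{\mathrm{min}}=\widetilde{x_0}$ and constants uniform in $v$, hence in $\sigma$. Where you genuinely diverge is the linear-algebra step that extracts $\alpha_2(\mu(\rho(\sigma)))$ from these norm estimates. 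The paper works inside the KAK formalism: writing $\rho(\sigma)=k_-\exp(\mu)k_+$ and using the triviality of the opposition involution, it rewrites $\|\rho(\sigma)^{-1}v\|_h(\widetilde{x_0})$ as $\|\exp(\mu)k'v\|_h(\widetilde{x_0})$ with $k'=k^{op}k_-^{-1}\in K$, then probes with the single $\sigma$-dependent vector $v'=(k')^{-1}e_2\in\UU_{\mathbb{R}}$, for which $\|\exp(\mu)e_2\|_h^2(\widetilde{x_0})\leqslant\frac{1}{2}e^{2\alpha_2(\mu)}+\frac{1}{2}$, and compares with the exponential growth of $f_{v'}$. You instead encode $\alpha_2(\mu(\rho(\sigma)))=\log s_2(\rho(\sigma))=\log s_2(\rho(\sigma)^{-1})$ (again via the trivial opposition involution) and run the Courant--Fischer max--min over the fixed plane $W=\UU_{\mathbb{R}}$, letting the min--max absorb all the $K$-bookkeeping. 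The two devices are interchangeable --- the paper's probe vector $v'$ is essentially the minimizer your min over $W$ would select --- but your packaging makes the uniformity in $\sigma$ manifest (the plane $W$ never moves) and avoids explicit manipulation of $k^{op}$, $k_{\pm}$, at the cost of invoking the min--max characterization of singular values, while the paper's computation is self-contained. Two small points, neither a gap: the uniqueness of the minimum you need is \prettyref{thm:index}(2) (citing (3) is also fine, since a minimum is a critical point), and your use of $\alpha_2(\mu(\cdot))\geqslant 0$ to absorb the regime where $1-4C_2+4C_1e^{\varepsilon d}$ could be small or negative is exactly the right way to close the argument.
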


\begin{proof}
     Let $V\cong\mathbb{R}^{5}$ be the fiber of $\EE_{\mathbb{R}}$ at $\widetilde{x_0}$. We can choose a basis $\{e_1,e_2,f_2,f_1,f_3\}$ of $\mathbb{R}^{5}$ which is an orthonormal basis of the indefinite billinear form $(h_\UU\oplus(-h_\VV))|_{\EE_{\mathbb{R}}}$ whose signature is $(2,3)$ given by the harmonic metric such that the standard representation $\mathfrak{a}\to\mathfrak{gl}(V)$ is given by
     \begin{equation}\label{eq:matrix}
         A:=\begin{pmatrix}
         0&0&0&a_1&0\\
         0&0&a_2&0&0\\
         0&a_2&0&0&0\\
         a_1&0&0&0&0\\\
         0&0&0&0&0
     \end{pmatrix}\mapsto\begin{pmatrix}
         0&0&0&a_1&0\\
         0&0&a_2&0&0\\
         0&a_2&0&0&0\\
         a_1&0&0&0&0\\\
         0&0&0&0&0
     \end{pmatrix}, 
     \end{equation}
     c.f.
     \prettyref{example:sopq}.
     Hence this representation
     has the following weight space decomposition:
    \[V_{\theta_i}=\mathbb{R}\cdot(e_i+f_i),V_{-\theta_i}=\mathbb{R}\cdot(e_i-f_i),V_0=\mathbb{R}\cdot f_3,\]
    where $\theta_i=[A\mapsto a_i]\in\mathfrak{a}^\vee$ is the linear function defined in  \prettyref{example:sopq}.
For any $\mu\in\overline{\mathfrak{a}^+}$ with $\theta_i(\mu)=:\mu_i$ and $\mu_i\geqslant0$ where $i=1,2$, we obtain that
    \[2\exp(\mu)\cdot e_i=\exp(\mu)\cdot(e_i+f_i)+\exp(\mu)\cdot(e_i-f_i)=\exp(\mu_i)\cdot(e_i+f_i)+\exp(-\mu_i)\cdot(e_i-f_i).\]

    Now we take an arbitrary $\sigma\in\pi_1(X)$ and consider the KAK decomposition of $\rho(\sigma)$, i.e.
    $\rho(\sigma)=k_-\exp(\mu)k_+$, where $k_-,k_+\in K$ and $\mu=\mu(\rho(\sigma))\in\overline{\mathfrak{a}^+}$. We have that
    \[\begin{aligned}
        \|v\|_{h}( \widetilde{x_0}\cdot \sigma)=&\|\rho(\sigma)^{-1}\cdot v\|_{h}(\widetilde{x_0})\\=&\|\exp(-\mu)k_-^{-1}\cdot v\|_{h}(\widetilde{x_0})\quad(k_+^{-1}\mbox{ preserves the harmonic metric})\\=&\|\operatorname{Ad}((k^{op})^{-1})\exp(\mu)k_-^{-1}\cdot v\|_{h}(\widetilde{x_0})\quad(\mbox{opposition involution})\\
        =&\|\exp(\mu)k^\prime\cdot v\|_{h}(\widetilde{x_0})\quad(k^\prime=k^{op}k_-^{-1}\in K).
    \end{aligned}\]
    Now since $K$ preserves $\bigoplus_{i=1}^2\mathbb{R}\cdot {e_i}$, we take $v^\prime=(k')^{-1}\cdot e_2\in\bigoplus_{i=1}^2\mathbb{R}\cdot {e_i}$ (dependent on the choice of $\sigma$). Recall that it extends to a global flat section $v'\colon\mathbb{H}^2\to\EE$ and splits as $\sum_{i=-2}^2(v')_i$ with respect to the decomposition $\EE=\bigoplus_{i=-2}^2\LL_i$. By our choice of the basis we also know that $(v^\prime)_2(\widetilde{x_0})=0$, i.e. $\widetilde{x_0}$ is the unique minima of $f_{v'}$ (c.f. \prettyref{thm:index}). We obtain that
    \[\begin{aligned}
        &\|v^\prime\|_{h}^2(\widetilde{x_0}\cdot\sigma)\\
        =&\|\exp(\mu)\cdot e_2\|_{h}^2(\widetilde{x_0})\\
        =&\dfrac{1}{4}\exp(2\mu_2)\cdot\|e_2+f_2\|_{h}^2(\widetilde{x_0})+\dfrac{1}{4}\exp(-2\mu_2)\cdot\|e_2-f_2\|_{h}^2(\widetilde{x_0})\\
        \leqslant&\dfrac{1}{2}\exp(2\mu_2)+\dfrac{1}{2}\quad(\mbox{since }\|e_2+f_2\|_{h}^2=\|e_2-f_2\|_{h}^2=2,\ \mu_2\geqslant0).
    \end{aligned}\]

    On the other hand, 
    \[\begin{aligned}
        &\|v^\prime\|_{h}^2(\widetilde{x_0}\cdot\sigma)\\
        \geqslant&\|(v^\prime)_2\|_{h}^2(\widetilde{x_0}\cdot\sigma)\\
        =&f_{v'}(\widetilde{x_0}\cdot\sigma)\\
        \geqslant& C_1\cdot\exp(d(\widetilde{x_0},\widetilde{x_0}\cdot\sigma))-C_2\quad(\mbox{by \prettyref{coro:exp}}).
    \end{aligned}\]
    Note that here $C_1,C_2$ are independent of our choice of $\sigma$. Therefore we have that 
    \[\alpha_2(\mu(\rho(\sigma)))=\mu_2\geqslant C_3\cdot d(\widetilde{x_0},\widetilde{x_0}\cdot\sigma)-C_4\]
    for some constant $C_3,C_4>0$ which are independent of our choice of $\sigma$, which implies that $\rho$ is $\{\alpha_2\}$-almost dominated.
\end{proof}

Now the main result \prettyref{thm:main} follows from \prettyref{thm:soAnosov}, \prettyref{prop:stable} and \prettyref{fact:equi}.

\begin{remark}\label{rem:higher}
    We should point out that we do not use the condition that $\operatorname{rank}(\LL_0)=1$ except in \prettyref{prop:stable}! One can freely change $\LL_0$ into a parabolic orthogonal vector bundle of rank $n$ whose underlying bundle has trivial determinant in all other results. In particular, if the resulting parabolic $\SO_0(2,n+2)$-Higgs bundle
    \[\begin{tikzcd}
	{\mathcal{L}_{-2}} & {\mathcal{L}_{-1}} & {\mathcal{L}_{0}} & {\mathcal{L}_{1}} & {\mathcal{L}_{2}}
	\arrow["\tau^\vee"', from=1-1, to=1-2]
	\arrow["\beta^\vee"', from=1-2, to=1-3]
	\arrow["\beta"', from=1-3, to=1-4]
	\arrow["\tau"', from=1-4, to=1-5]
	\arrow["\gamma"', curve={height=18pt}, from=1-5, to=1-2]
	\arrow["\gamma^\vee"', curve={height=18pt}, from=1-4, to=1-1]
\end{tikzcd}\]
is still stable, with nonzero weights and $\operatorname{pardeg}(\LL_1)<\deg(\KK_\oX(D))$, then we also have 
\begin{itemize}
    \item[(1)] $\|\tau\|$ is bounded by positive constants since the graded residue is still lower-triangular; (\prettyref{lemma:completeness})

    \item[(2)] $\|\tau\|-\|\gamma\|>C$ for a positive constant $C$ because the Hitchin's self-dual equation on $\LL_{-2}$ has the same form. (\prettyref{prop:boundedness})
\end{itemize}
These imply that the corresponding representation $\pi_1(X)\to\SO_0(2,n+2)$ is $\{\alpha_2\}$-almost dominated representation by applying the process in \prettyref{sec:main} again. 
\end{remark}



\paragraph{Data availability} Data sharing does not apply to this article as no datasets were generated or analyzed during the current research.

\paragraph{Conflict of Interest} The author stated that there is no Conflict of interest.


\bibliographystyle{plain}
\bibliography{bib}

\end{document}